\newtheorem{thm}{Theorem}[section]
\newtheorem{lm}{Lemma}[section]
\newtheorem{defn}{Definition}[section]
\newtheorem{pro}{Proposition}[section]
\newtheorem{re}{Remark}[section]
\newtheorem{cor}{Corollary}[section]
\newcommand{\R}{\mathbb{R}}
\begin{document}
\title[]
{Isoperimetry for asymptotically flat 3-manifolds with positive ADM mass }

\author{Haobin Yu}
\address[Haobin Yu]{Department of Mathematics, Hangzhou Normal University, Hangzhou, 311121, P.\ R.\ China}
 \email{robin1055@126.com}
\date{}

\begin{abstract}
Let  $(M^3, g)$ be an asymptotically flat 3-manifold with  positive ADM mass.
In this paper, we show that  each leaf of the canonical foliation is the unique isoperimetric surface for the volume it encloses.  Our proof is based on the "fill-in" argument and sharp isoperimetric inequality
on asymptotically flat 3-manifold with nonnegative scalar curvature.

\end{abstract}

\keywords{isoperimetric surface, uniqueness, canonical foliation, volume comparison}
\maketitle
\markboth{Yu Haobin}{Isoperimetry for AF 3-manifolds with positive mass}

\section{Introduction}
A three manifold $(M, g)$ is said to be asymptotically flat if there are a compact subset $K\subseteq  M$ and a chart
\begin{equation}\label{chart:infinity}
M \setminus K\cong \R^3 \setminus \overline{B_{\frac{1}{2}}(0)}
\end{equation}
so that the components of the metric tensor have the form
\[
g_{ij} = \delta_{ij} + \sigma_{ij},\]
where
\begin{equation}\label{tau}
|x|^{\alpha}|\partial^{\alpha}\sigma_{ij}(x)|=O(|x|^{-\tau}),
\ \ \ \text{as}\ \ \ \ |x|\rightarrow \infty
\end{equation}
for some $\tau > 1/2$ and all multi-indices $\alpha$ with $|\alpha|=0,1,2$.
We also require that the scalar curvature of $(M, g)$ is integrable. 
The ADM-mass (after Arnowitt, Deser and Misner \cite{ADM:1961}) of such an asymptotically flat manifold $(M, g)$ is given by 
\[
m_{ADM} =  \lim_{\rho \to \infty } \frac{1}{16 \pi \rho} \int_{\{|x| = \rho\}} \sum_{i, j = 1}^3 \left( \partial_i g_{ij} - \partial_j g_{ii} \right) x^j
\]
where integration is with respect to the Euclidean metric. 

In their seminal paper \cite{HY}, Huisken and Yau proved that if 
$(M, g)$  is $C^4$-asymptotic to Schwarzschild of mass $m>0$, then out of some compact set, $M$  can be foliated by a family of strictly volume preserving stable constant mean curvature spheres $\{\Sigma_H\}_{H\leq H_0}$. 
Moreover, the leaves of this foliation are the unique volume preserving stable CMC spheres of their mean curvature within a large class of surfaces. Their uniqueness result was later strengthened by Qing and Tian\cite{QT}.
Various extensions of these results that allow for weaker asymptotic conditions
 have been proven in \cite{Huang2010,Ma2011,Ma2016}.
The following  optimal existence and uniqueness results for general asymptotically flat 3-manifolds
 was established by Nerz in \cite{Nerz2015}.
\begin{thm}\label{thm:existence:CMC}
Let $(M, g)$ be a complete Riemannian $3$-manifold that is asymptotically flat at rate $\tau > 1/2$ and which has  positive mass.  Suppose the scalar curvature of $(M, g)$ is nonnegative or satisfies $R(g)=O(|x|^{-\frac{5}{2}-\tau})$.
Then for some compact $L$,
$M^3\setminus L$ can be foliated by stable CMC spheres $\{\Sigma_{\sigma}\}_{\sigma>\sigma_*}$ with
 $\frac{2}{\sigma}\ll 1$ being the mean curvature of $\Sigma_{\sigma}$. Moreover, any large stable CMC sphere 
with mean curvature $\frac{2}{\sigma}$ and which is geometrically close to $S_{\sigma}$ must coincides with
$\Sigma_{\sigma}$.
\end{thm}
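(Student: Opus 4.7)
The plan is to construct each leaf $\Sigma_\sigma$ as a normal graph $u_\sigma \in C^{2,\alpha}(S^2)$ over the large coordinate sphere $S_\sigma = \{|x| = \sigma\}$ in the asymptotic chart, and to organize the family by the mean curvature $H = 2/\sigma$. First I would compute the mean curvature of such a graph and linearize at $u = 0$; the leading operator is $\sigma^{-2}(-\Delta_{S^2} - 2)$, which has a three-dimensional kernel spanned by the restrictions of the Euclidean coordinate functions (corresponding to translations) and a one-dimensional cokernel direction corresponding to the mean value. Away from these finite-dimensional obstructions the operator is uniformly invertible, with norms controlled by the asymptotic flatness rate $\tau > 1/2$.

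To handle the translation kernel I would perform a Lyapunov--Schmidt reduction: decompose $u = \bar u + \vec a \cdot \omega + u^\perp$, and for each choice of translation vector $\vec a$ and each prescribed $H$ close to $2/\sigma$ solve the orthogonal component $u^\perp$ by a contraction-mapping argument using the decay of $\sigma_{ij}$. The remaining three-dimensional equation expresses the condition that the mean-curvature error against the kernel functions vanish; this is where positive mass (or the hypothesis $R(g) = O(|x|^{-5/2-\tau})$) enters decisively. A careful expansion of $H$ on a translated coordinate sphere in the asymptotic chart produces a leading obstruction of the form $c\, m_{ADM}\, \vec a / \sigma^{4}$ plus remainders that are strictly of higher order under (\ref{tau}), so $m_{ADM} > 0$ guarantees a unique solution $\vec a = \vec a(\sigma)$ pinning down the center of $\Sigma_\sigma$. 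Once $\Sigma_\sigma$ is produced for every $\sigma > \sigma_*$, differentiating in $\sigma$ and using the strict positivity of the normal speed furnishes the foliation of $M \setminus L$, while stability is checked by noting that the Jacobi operator $-\Delta_{\Sigma_\sigma} - |A|^2 - \mathrm{Ric}(\nu,\nu)$ on mean-zero test functions is, in rescaled coordinates, a perturbation of the round-sphere Jacobi operator whose first eigenvalue is strictly positive.

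For uniqueness, I would show that any large stable CMC sphere $\widetilde \Sigma$ with mean curvature $2/\sigma$ that is geometrically close to $S_\sigma$ admits a priori $C^{2,\alpha}$ graph representations over $S_\sigma$, via the stability inequality together with standard curvature and Sobolev estimates on large CMC spheres. This places $\widetilde \Sigma$ inside the Lyapunov--Schmidt framework above, and the uniqueness of $(\vec a(\sigma), u^\perp)$ then forces $\widetilde \Sigma = \Sigma_\sigma$. The main obstacle throughout is the translation kernel: on exactly Euclidean space the three-parameter family of translated spheres survives to all orders and the foliation is non-unique, so the entire argument must extract a coercive effective equation for the center from the interplay between the decay rate $\tau$, the scalar curvature hypothesis, and positivity of the ADM integrand at infinity, rather than from the rigid $C^4$-Schwarzschild asymptotics used in the Huisken--Yau construction. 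Obtaining sharp remainder estimates so that the $m_{ADM}\,\vec a/\sigma^4$ term genuinely dominates under only $\tau > 1/2$ is the delicate analytical core of the theorem.
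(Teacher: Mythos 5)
The paper does not supply its own proof of Theorem \ref{thm:existence:CMC}; it is quoted verbatim from Nerz \cite{Nerz2015} and used as a black box, so there is no in-paper argument to line your sketch up against. Comparing instead with the proofs in the literature: your outline is essentially a Huisken--Yau/Metzger-style Lyapunov--Schmidt perturbation off the large coordinate sphere, whereas Nerz's argument runs differently, deriving sharp a priori position and graph estimates for stable CMC spheres from the stability inequality and the Hawking mass, and then closing a continuity scheme in the mean-curvature parameter. The two routes meet at the linearized level but handle the translational degeneracy by different mechanisms.

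There is also a genuine gap at the step you yourself flag as the "delicate analytical core." You assert that the projection of the mean-curvature error onto the translation kernel has the form $c\,m_{ADM}\,\vec a/\sigma^4$ plus strictly higher-order remainders under the bare hypothesis $\tau>1/2$. That is not justified as stated and, without the scalar curvature hypothesis, is simply false. The kernel projection is, to leading order, an integral of $\ric(\nu,\,\cdot\,)$ against the degree-one spherical harmonics over $S_\sigma$; generic $\tau$-decay gives $\ric=O(|x|^{-2-\tau})$, so this integral is a priori only $O(\sigma^{-\tau})$, which swamps $\sigma^{-4}$ for every admissible $\tau$. The reason the $m_{ADM}\,\vec a/\sigma^4$ term can nonetheless be extracted as the effective restoring force is a divergence-structure (Pohozaev-type) identity in which the hypothesis $R\ge 0$ or $R=O(|x|^{-5/2-\tau})$ converts the dangerous Ricci integral into a boundary term controlled by the mass; this is exactly why the scalar-curvature assumption appears in the statement and cannot be dropped. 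Your sketch says the scalar curvature hypothesis ``enters decisively'' but does not exhibit the mechanism, and without it the reduced three-dimensional equation does not close. A smaller slip: $\sigma^{-2}(-\Delta_{S^2}-2)$ is self-adjoint, so its cokernel is the same three-dimensional space of degree-one harmonics as its kernel; there is no separate one-dimensional ``mean value'' cokernel direction, since the constants lie in the eigenspace with eigenvalue $-2\sigma^{-2}$, which is invertible.
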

\begin{re}
In \cite{Nerz2015}, Nerz showed that the decay assumptions are optimal and cannot be weakened to guarantee 
the existence of canonical  foliation.
\end{re}
It's a very natural question to ask if the leaves of the canonical foliation
 are isoperimetric surfaces for the volume they enclose, i.e.,
given $V\gg1$, whether the  isoperimetric profile $A(V)$ can be achieved by leaves $\{\Sigma_V\}$ of the canonical foliation. Here $A:[0, \infty)\rightarrow [0, \infty)$ is defined by
\begin{align}\label{inf}
A(V)=\inf\{&\mathcal{H}^2(\partial ^*\Omega):\Omega\subset M \text{
is a compact region and}\ \ 
\mathcal{L}^3(\Omega)=V
\},
\end{align}
where $\mathcal{ H}^2$ is 2-dimensional Hausdorff measure for the reduced boundary of $\Omega$,
and $\mathcal{L}^3(\Omega)$ is the Lebesgue measure of $\Omega$ with respect to metric $g$.

In the asymptotically Schwarzschild setting, the study of isoperimetric structure on asymptotically flat Riemannian 3-manifolds $(M^3, g)$ may  date back to Bray's work. In \cite{Bray}, Bray showed that the isoperimetric surfaces of
spatial Schwarzschild manifold are exactly round centered spheres. He
deduced that  if $(M^3, g)$ is the compact perturbations of the exact Schwarzschild metric
then the large isoperimetric surfaces are also round centered spheres.
By isoperimetric  technique, Bray gave a proof of Penrose inequality using positive mass theorem.
He conjectured that the volume-preserving stable constant mean curvature spheres
constructed by Huisken-Yau \cite{HY}
are isoperimetric surfaces. Building on Bray's volume comparison, 
Eichmair-Metzger \cite{EM:2013:1,EM:2013:2} obtained
global uniqueness of large solutions of the isoperimetric problem in any dimension for
$(M, g)$ asymptotic to Schwarzschild with mass $m > 0$
and  they gave a confirm answer to Bray's conjecture.

To study the isoperemetric properties of  3-manifolds with general asymptotics, Huisken\cite{Huisken2006}  introduced the the concepts of isoperimetric mass and quasilocal isoperimetric mass
which only require very low regularity.
\begin{defn}(\text{Huisken}) Let $(M^3, g)$ be a $C^0$-asymptotically flat manifold
 and $\Omega$ be a smooth bounded domain.
The quasilocal isoperimetric mass of $\Omega$ is
\[
m_{iso}(\Omega)=\frac{2}{\mathcal{H}^2(\partial\Omega)}\Big(\mathcal{L}^3(\Omega)
-\frac{1}{6\sqrt{\pi}}\mathcal{H}^2(\partial\Omega)^{\frac{3}{2}}\Big).
\]
The isoperimetric mass of $(M^3, g)$ is defined by
\[
m_{iso}(M, g)=\sup_{\{\Omega_i\}_{i=1}^{\infty}}(\limsup_{i\rightarrow \infty}m_{iso}(\Omega_i)),
\]
where $\{\Omega_i\}$ is  an exhaustion of $(M, g)$.
\end{defn}
Subsequent to the work of Huisken, Fan-Miao-Shi-Tam \cite{FST} observed that the "lim sup" in  Huisken's
definition  recovers the ADM-mass of the initial data set when evaluated
along exhaustions by concentric coordinate balls in an asymptotic coordinate system.
Hence, $m_{ADM}(M,g)\leq m_{iso}(M, g)$. 
The following result was proposed by  Huisken\cite{Huisken2006,Huisken2009}
and proven by Jauregui-Lee\cite{JL2017}. 
An alternative proof was given by the author in joint work with Chodosh-Eichmair-Shi\cite{CESY}. (Both approaches also use an important insight by Fan-Miao-Shi-Tam\cite{FST}.)
\begin{thm}
Let $(M,g)$ be an asymptotically flat Riemannian $3$-manifold at decay rate $\tau>\frac{1}{2}$ and
which has non-negative scalar curvature. Then
\[
m_{ADM}(M,g)=m_{iso}(M, g).
\]
\end{thm}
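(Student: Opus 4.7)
The plan is to establish the two inequalities $m_{ADM}(M,g) \leq m_{iso}(M,g)$ and $m_{iso}(M,g) \leq m_{ADM}(M,g)$ separately. The first is a direct Fan--Miao--Shi--Tam calculation along the exhaustion by coordinate balls and does not use the sign of the scalar curvature; the second is where non-negativity of $R(g)$ enters, and is the substantive direction.

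For $m_{ADM} \leq m_{iso}$, let $B_\rho = \{|x|\le\rho\}$ in the asymptotic chart. Using $g_{ij}=\delta_{ij}+\sigma_{ij}$ with the decay hypothesis \eqref{tau}, I would expand $\mathcal{H}^2(\partial B_\rho)$ and $\mathcal{L}^3(B_\rho)$ to leading and sub-leading order in $\rho$, and substitute into the definition of $m_{iso}(B_\rho)$. The Euclidean leading contribution cancels because round spheres are isoperimetric in $\mathbb{R}^3$. What remains, after applying the divergence theorem to convert the volume integral of $\tfrac{1}{2}\sigma_{ii}$ into a boundary integral, is precisely $\tfrac{m_{ADM}}{2}\mathcal{H}^2(\partial B_\rho) + o(\mathcal{H}^2(\partial B_\rho))$, with the ADM integrand $\partial_i g_{ij}-\partial_j g_{ii}$ appearing naturally. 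Dividing and sending $\rho\to\infty$ gives $m_{iso}(B_\rho)\to m_{ADM}$, hence $m_{iso}\ge m_{ADM}$.

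For the reverse direction, fix an exhaustion $\{\Omega_i\}$ with $m_{iso}(\Omega_i)\to m_{iso}(M,g)$. First I would replace each $\Omega_i$ by an isoperimetric minimizer of the same volume; since the map $a\mapsto\tfrac{2}{a}\bigl(\mathcal{L}^3(\Omega_i)-\tfrac{1}{6\sqrt\pi}a^{3/2}\bigr)$ is decreasing on the relevant range, this only increases $m_{iso}$, so it suffices to assume the $\Omega_i$ are isoperimetric. The core input is the sharp isoperimetric inequality advertised in the abstract: for every bounded smooth region $\Omega$ in an AF 3-manifold with $R(g)\geq 0$ and positive mass,
\[
\mathcal{L}^3(\Omega) - \frac{1}{6\sqrt\pi}\mathcal{H}^2(\partial\Omega)^{3/2} \;\le\; \frac{m_{ADM}}{2}\,\mathcal{H}^2(\partial\Omega) + o\!\bigl(\mathcal{H}^2(\partial\Omega)\bigr)
\]
as $\mathcal{H}^2(\partial\Omega)\to\infty$. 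Dividing by $\mathcal{H}^2(\partial\Omega)$ and passing to the limit yields $m_{iso}(M,g)\le m_{ADM}$.

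The hard part is proving the sharp isoperimetric inequality with constant exactly $m_{ADM}/2$. Here the fill-in argument enters: given $\Omega$, one fills the exterior of $\Omega$ with a model end (e.g.\ a Shi--Tam or Mantoulidis--Miao collar matching the boundary data into Schwarzschild) to produce an auxiliary asymptotically flat 3-manifold with non-negative scalar curvature whose ADM mass is controlled by the isoperimetric deficit of $\Omega$ plus $m_{ADM}$. Non-negativity of the total mass (positive mass theorem, or the Riemannian Penrose inequality should horizons appear during the fill-in) then forces the deficit to be bounded by $m_{ADM}$ modulo the stated error. The main obstacle is to carry out this fill-in so that the extra mass introduced on the fill side \emph{exactly} matches the quasilocal isoperimetric deficit of $\Omega$, with no spurious constants; this is what allows the two inequalities to pinch together to the desired equality.
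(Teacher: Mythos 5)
Your reduction is sound at the level of structure: the inequality $m_{ADM}\le m_{iso}$ is indeed the Fan--Miao--Shi--Tam computation along coordinate balls, and the reverse inequality $m_{iso}\le m_{ADM}$ is logically equivalent (after passing to isoperimetric minimizers, which as you correctly note can only increase $m_{iso}(\Omega_i)$) to the sharp isoperimetric inequality \eqref{eqn:isoperimetricinequality}. The minimizer replacement is a good move, since it makes the error term $o(\mathcal{H}^2(\partial\Omega_i))$ genuinely small relative to the normalizing area. One small quibble: the vanishing of the Euclidean leading term is the algebraic identity $\frac{4\pi}{3}\rho^3=\frac{1}{6\sqrt{\pi}}(4\pi\rho^2)^{3/2}$, not a consequence of round spheres being isoperimetric.

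The problem is the last step. In the paper, the sharp isoperimetric inequality (Theorem~\ref{cor:isoperimetricinequality}) is presented as a \emph{consequence} of the equality $m_{ADM}=m_{iso}$, so you cannot invoke it without an independent proof, and the independent proof you sketch via fill-in does not match the cited arguments and does not obviously close. Both Jauregui--Lee \cite{JL2017} and the alternative argument of \cite{CESY} obtain $m_{iso}\le m_{ADM}$ by running Huisken's monotonicity of a Hawking-type quantity under (weak or modified) mean curvature flow starting from $\partial\Omega$; it is exactly this flow that couples the enclosed volume, the boundary area, and the asymptotic mass with the sharp constant $m_{ADM}/2$. A fill-in cannot do this: if you replace the exterior of $\Omega$ by a model end, the resulting manifold no longer sees $m_{ADM}$ at all, while if you replace the interior of $\Omega$ (which is what \emph{this} paper does, and for the different purpose of centering the blow-down limit in Proposition~\ref{prop:centering-iso}), the mass of the new manifold is controlled but $\mathcal{L}^3(\Omega)$ does not enter. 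The Mantoulidis--Schoen/Miao gluing controls mean curvature jumps and hence Hawking/Brown--York-type data, not volume, so there is no mechanism in your sketch that produces the volume term with the precise coefficient. To repair the proof, replace the fill-in step with the Geroch-type monotonicity formula under mean curvature flow as in \cite{Huisken2006,Huisken2009,JL2017}.
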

Let $(M,g)$ be an asymptotically flat Riemannian $3$-manifold with non-negative scalar curvature
and $\Omega \subset M$ be a compact region.  An immediate consequence of the theorem above is
\begin {thm} [Sharp isoperimetric inequality] \label{cor:isoperimetricinequality} 
\begin{align} \label{eqn:isoperimetricinequality}
V (\Omega) \leq \frac{A(\partial \Omega)^{3/2}}{6 \sqrt \pi} +  \frac{m_{ADM}}{2} A (\partial \Omega) + o(1) A(\partial \Omega)
\end{align}
as $V(\Omega) \to \infty$.
\end {thm}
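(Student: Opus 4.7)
My plan is to obtain \eqref{eqn:isoperimetricinequality} by a direct algebraic rearrangement of Huisken's definition of $m_{iso}(\Omega)$, combined with the identity $m_{iso}(M,g) = m_{ADM}$ from the preceding theorem.

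Unpacking the definition, the target inequality is equivalent, after multiplying both sides by $2/A(\partial\Omega)$, to
\[
m_{iso}(\Omega) \leq m_{ADM} + o(1) \quad \text{as } V(\Omega)\to\infty,
\]
so the task reduces to establishing this asymptotic bound on the quasi-local isoperimetric mass. For a sequence $\{\Omega_i\}$ that exhausts $(M,g)$, the sup-definition of $m_{iso}(M,g)$ combined with Theorem 1.3 directly yields $\limsup_i m_{iso}(\Omega_i) \leq m_{ADM}$, which gives the inequality along any exhausting family.

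The remaining content is therefore to extend the bound to an arbitrary sequence $\{\Omega_i\}$ of compact regions with $V(\Omega_i) \to \infty$ that need not exhaust $M$. For this I would choose coordinate radii $\rho_i \to \infty$ so that $\Omega_i \subset\subset B_{\rho_i}$ and $\{B_{\rho_i}\}$ is itself an exhaustion. Using the additivity
\[
V(\Omega_i) = V(B_{\rho_i}) - V(B_{\rho_i} \setminus \Omega_i)
\]
together with a near-Euclidean isoperimetric estimate applied to the annular region $B_{\rho_i}\setminus \Omega_i$ (valid up to asymptotically flat corrections, since the region lies in the asymptotic chart), I would relate $m_{iso}(\Omega_i)$ to $m_{iso}(B_{\rho_i})$ up to an $o(1)$ error. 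Invoking Theorem 1.3 along the exhaustion $\{B_{\rho_i}\}$ then forces $\limsup m_{iso}(\Omega_i) \leq m_{ADM}$, completing the argument.

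The main obstacle is the bookkeeping in this reduction: the radii $\rho_i$ must be chosen large enough that the near-Euclidean isoperimetric inequality on the annulus $B_{\rho_i}\setminus \Omega_i$ holds with error bounded by $o(1)\, A(\partial\Omega_i)$, yet not so large that the boundary contribution $A(\partial B_{\rho_i})$ overwhelms $A(\partial\Omega_i)$. A careful diagonal choice of $\rho_i$, together with the precise expansions of volume and area for coordinate balls dictated by the decay \eqref{tau}, should give the required estimates and close the proof.
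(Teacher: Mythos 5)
Your first step---rewriting the claim as $m_{iso}(\Omega)\leq m_{ADM}+o(1)$ as $V(\Omega)\to\infty$ and noting that for exhaustions this drops straight out of the definition of $m_{iso}(M,g)$ together with $m_{iso}(M,g)=m_{ADM}$---is exactly the algebraic observation the paper has in mind when it calls Theorem~\ref{cor:isoperimetricinequality} an ``immediate consequence.'' You are also right to flag that a literal reading of Huisken's definition (the supremum is over \emph{exhaustions}) does not, by itself, bound $m_{iso}(\Omega_i)$ along an arbitrary sequence of compact regions with $V(\Omega_i)\to\infty$; those $\Omega_i$ may all miss a fixed compact set or fail to be nested, so they never form an exhaustion.

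The trouble is that your proposed repair does not close. You want to use $V(\Omega_i)=V(B_{\rho_i})-V(B_{\rho_i}\setminus\Omega_i)$ and a ``near-Euclidean isoperimetric estimate on the annulus'' to transfer the bound from $B_{\rho_i}$ to $\Omega_i$, but this requires a \emph{lower} bound on $V(B_{\rho_i}\setminus\Omega_i)$, whereas the Euclidean-type isoperimetric inequality (including the one with the mass correction) only gives an \emph{upper} bound on volume in terms of boundary area; there is no general lower volume bound for a region between two boundaries. Worse, you need the resulting error to be $o(1)\,A(\partial\Omega_i)$, but the natural errors in any such comparison scale with $A(\partial B_{\rho_i})$, which necessarily dominates $A(\partial\Omega_i)$ once $\rho_i$ is large enough for $\Omega_i\subset\subset B_{\rho_i}$; your own last paragraph acknowledges this tension without resolving it. What actually licenses the paper's claim is that Jauregui--Lee (and Fan--Miao--Shi--Tam, Shi) prove the inequality directly in the form ``for every $\varepsilon>0$ there is $V_0$ such that $m_{iso}(\Omega)\leq m_{ADM}+\varepsilon$ for every compact region $\Omega$ with $V(\Omega)\geq V_0$''---a statement about individual regions, not just exhaustions. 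With that formulation of Theorem~1.3 in hand, your first paragraph already finishes the proof and no annulus argument is needed; without it, the reduction you sketch is a genuine gap.
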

In a notable paper, Shi \cite{Shi2016} established the isoperimetric inequality on 
asymptotically flat 3-manifolds with non-negative scalar curvature,
based on which Carlotto, Chodosh and Eichmair\cite{CCE}  showed that for any $V>0$, 
there always exists a smooth isoperimetric region $\Omega$ with $Vol(\Omega)=V$.
In a jointed work with Chodosh, Eichmair and Shi\cite{CESY},  
 we gave a complete characterization of  isoperimetric structure in large scale for asymptotically flat Riemannian $3$-manifold with non-negative scalar curvature.
\begin{thm}\label{CESY}
Let $(M, g)$ be a complete Riemannian $3$-manifold that is asymptotically flat at rate $\tau > 1/2$ and which has non-negative scalar curvature and positive mass. There is $V_0 > 0$ with the following property. Let $V \geq V_0$. There is a unique isoperimetic region $\Omega_V $ with $V(\Omega_V)=V$ whose boundary consists of the horizon $\partial M$ and a leaf of the canonical foliation of the end of $M$.
\end{thm}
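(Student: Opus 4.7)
The plan is to combine the sharp isoperimetric inequality (Theorem \ref{cor:isoperimetricinequality}) and the Nerz uniqueness statement (Theorem \ref{thm:existence:CMC}) with a fill-in argument that rules out ``drift to infinity'' of large isoperimetric regions. Existence of a smooth isoperimetric region $\Omega_V$ for every $V>0$ is supplied by Carlotto-Chodosh-Eichmair \cite{CCE}. By the first and second variations of area under volume-preserving deformations, the free portion $\Sigma_V := \partial^* \Omega_V \setminus \partial M$ is a closed, stable CMC surface with constant mean curvature $H_V = A'(V)$. Since $\partial M$ (when non-empty) is the outermost minimal surface, once $\Sigma_V$ is placed in the asymptotic end the isoperimetric property forces $\partial M \subset \Omega_V$, so the central task is to identify $\Sigma_V$ with a leaf of the canonical foliation.

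A first round of estimates produces coarse asymptotic control. Taking large coordinate spheres (or Nerz leaves) as competitors in \eqref{inf} yields $A(V) \leq 6\sqrt{\pi}\, V^{2/3}(1+o(1))$; the sharp inequality \eqref{eqn:isoperimetricinequality} provides the matching lower bound, so $A(V) = 6\sqrt{\pi}\, V^{2/3}(1+o(1))$ and $H_V \to 0$. Substituting back into \eqref{eqn:isoperimetricinequality} forces near-equality, which pushes the Hawking mass $\mathfrak{m}_{\scriptscriptstyle\Haw}(\Sigma_V)$ up to $m_{ADM}$ in the limit; combined with the Gauss-Bonnet theorem on $\Sigma_V \cong S^2$, this yields $\int_{\Sigma_V}|\mathring{A}|^2 \to 0$, and Simons-type estimates, as in \cite{HY, Nerz2015}, upgrade it to $C^k$-almost-roundness of $\Sigma_V$ on scale $\sigma := 2/H_V$.

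The delicate remaining step is to show that $\Sigma_V$ is \emph{centered}, in the sense that $\Omega_V$ contains a coordinate ball concentric with the chart at infinity of size comparable to $\sigma$, rather than drifting off to enclose a region far from $\partial M$. Positive mass is essential here. For a large coordinate ball $B_r$ that does \emph{not} enclose the horizon one computes $V(B_r) - A(B_r)^{3/2}/(6\sqrt{\pi}) = o(A(B_r))$, strictly worse than the $\tfrac{m_{ADM}}{2} A$ gain enjoyed by balls that enclose $\partial M$ and capture all the mass. Following the fill-in strategy in the spirit of \cite{Bray, EM:2013:1, EM:2013:2, CESY}, one rearranges any hypothetical off-centered competitor into a centered one to obtain a strictly smaller area enclosing the same volume; this contradicts the isoperimetric character of $\Omega_V$ unless it was centered to begin with.

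With $\Sigma_V$ centered and $C^k$-close to a coordinate sphere of scale $\sigma$, the uniqueness part of Theorem \ref{thm:existence:CMC} identifies $\Sigma_V$ with the canonical leaf $\Sigma_\sigma$; since the enclosed volume is a monotone parameter along the foliation, uniqueness of $\Omega_V$ follows. I expect the centering step to be the main obstacle: in the absence of Schwarzschild asymptotics the translation-based isometry arguments of Bray are unavailable, and one must instead extract a strict improvement from the $o(A)$ error term in \eqref{eqn:isoperimetricinequality} -- this is precisely the role of the fill-in technique advertised in the abstract.
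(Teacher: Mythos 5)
Your proposal identifies the correct skeleton: existence of large isoperimetric regions from \cite{CCE}, asymptotic shape control from the sharp isoperimetric inequality, a centering step to rule out drift, and Nerz's uniqueness to identify the boundary with a leaf. The intermediate Hawking-mass/Simons reasoning you sketch to get $C^k$-almost-roundness is consistent with the Eichmair--Metzger and \cite{CESY} point of view; the paper simply takes the $C^{2,\alpha}_{loc}$ convergence of the rescaled regions to a unit ball $B_1(\xi)$ as known and focuses on showing $\xi=0$. That centering step is the whole ballgame, and your description of it -- ``one rearranges any hypothetical off-centered competitor into a centered one to obtain a strictly smaller area'' -- names the goal rather than a mechanism. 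Moreover, your attribution of the fill-in idea to \cite{Bray, EM:2013:1, EM:2013:2, CESY} is off: those works rely on Schwarzschild comparison, or on effective volume comparison and a mean-curvature-flow monotonicity formula, and the paper is explicit that the fill-in approach is new in this context.

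What the paper (in proving the more general Theorem \ref{mainthm}, of which this statement is a special case) actually does is quantitative. Fix a distant leaf $\Sigma_{\sigma_0}$, excise the region it bounds, and glue in a Mantoulidis--Schoen collar capped by a Euclidean ball (Section 2) to produce a metric $g_{\sigma_0}$ on $\R^3$ with corners along $\Sigma_{\sigma_0}$ and $\Sigma'_{\sigma_0}$. After Miao's smoothing and a Schoen--Yau conformal deformation one obtains a complete, horizonless asymptotically flat metric $\tilde g_{\sigma_0,\delta_0}$ with nonnegative scalar curvature whose ADM mass is \emph{strictly} smaller than $m(g)$: Theorem \ref{mass:compare} gives $m(\tilde g_{\sigma_0,\delta_0})=(1-\varepsilon_0)\,m(g_{\sigma_0,\delta_0})$ with $\varepsilon_0\geq\tfrac18$. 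Since $\tilde g_{\sigma_0,\delta_0}$ differs from $g$ only by a conformal factor $\bigl(1+A_{\sigma_0,\delta_0}/|x|+O(|x|^{-2})\bigr)^4$ in the asymptotic region, the volume and boundary area of $\Omega_k':=\Omega_k\setminus\overline{B_{1/2}(0)}$ in $\tilde g_{\sigma_0,\delta_0}$ can be computed from those in $g$ via the explicit integrals $\int_{\Omega_k'}|x|^{-1}$ and $\int_{\partial\Omega_k'}|x|^{-1}$. If $\xi\neq 0$, these integrals produce a positive isoperimetric deficit for $\Omega_k'$ in $(\R^3,\tilde g_{\sigma_0,\delta_0})$ of size $\tfrac{2\varepsilon_0\pi m|\xi|^2}{1+|\xi|^2}\rho_k^2+o(\rho_k^2)$, which contradicts the sharp isoperimetric inequality \eqref{eqn:isoperimetricinequality} applied to the smaller-mass metric. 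The strict mass drop $\varepsilon_0>0$ is what creates the contradiction; the sharp inequality on $(M,g)$ alone holds with near-equality for both centered and off-centered balls of radius $\rho_k$, so it cannot distinguish them. This is the concrete content missing from your sketch.
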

\begin{re}
In \cite{CESY}, a central step is to establish the effective volume comparison for 
large constant mean curvature surfaces as Eichmair-Metzger did in \cite{EM:2013:1,EM:2013:2}. 
To this end, we use the sharp isoperimetric inequality and a monotonicity formula under mean curvature flow, which was discovered  by Huisken \cite{Huisken2006,Huisken2009} and generalized by Jauregui-Lee \cite{JL2017} for modified mean curvature flow.
\end{re}
Without  the nonnegative scalar curvature assumption, the general existence of large isoperimetric regions 
was established by Carlotto, Chodosh and Eichmair in \cite{CCE}:
\begin{thm}
Let $(M^3, g)$ be an asymptotically flat Riemannian 3-manifold with horizon boundary, integrable scalar curvature, and positive ADM-mass.
 For all $V > 0$ sufficiently large there is a smooth isoperimetric region of volume $V$.
\end{thm}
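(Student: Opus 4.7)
The plan is to apply the direct method of the calculus of variations to the perimeter functional restricted to sets of volume $V$. Let $\{\Omega_i\}_{i=1}^\infty$ be a minimizing sequence with $\mathcal{L}^3(\Omega_i)=V$ and $\mathcal{H}^2(\partial^*\Omega_i)\to A(V)$. Comparison with a large coordinate sphere shows that $A(V)$ is finite and uniformly bounded, so by BV compactness, after passing to a subsequence, $\chi_{\Omega_i}\to \chi_{\Omega_\infty}$ in $L^1_{\mathrm{loc}}(M)$ for some Caccioppoli set $\Omega_\infty \subset M$. Put $V_\infty := V - \mathcal{L}^3(\Omega_\infty)\in[0,V]$, the volume lost to spatial infinity. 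The goal is to show $V_\infty = 0$: once that is known, interior lower semicontinuity of perimeter identifies $\Omega_\infty$ as an isoperimetric region of volume $V$, and standard regularity together with the horizon boundary condition yields smoothness.

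First, I would set up a concentration--compactness decomposition at infinity. Exhausting $M$ by compact sets $K_n\supset \partial M$ and using the chart (\ref{chart:infinity}), the exterior part $\Omega_i\setminus K_n$ sits in a region where $g$ differs from the Euclidean metric by $O(|x|^{-\tau})$. Combining the Euclidean relative isoperimetric inequality on large annuli with a standard perturbation estimate yields
\[
\liminf_{i\to\infty}\mathcal{H}^2(\partial^*\Omega_i\setminus K_n) \;\geq\; (36\pi V_\infty^2)^{1/3}-\varepsilon_n,
\]
where $\varepsilon_n\to 0$ as $n\to\infty$. Combined with interior lower semicontinuity this gives the structural bound
\[
A(V)\;\geq\;\mathcal{H}^2(\partial^*\Omega_\infty)+(36\pi V_\infty^2)^{1/3}.
\]

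Next, I would exploit positive ADM mass to produce a strictly better competitor whenever $V_\infty>0$, contradicting the previous display. The canonical foliation $\{\Sigma_\sigma\}$ from Theorem~\ref{thm:existence:CMC} is available here (positive mass and integrable scalar curvature suffice to meet its hypotheses). Each leaf $\Sigma_\sigma$ is a large stable CMC sphere whose Hawking mass $\Hm(\Sigma_\sigma)\to \ADMm>0$. From
\[
\Hm(\Sigma_\sigma)=\sqrt{\frac{\mathcal{H}^2(\Sigma_\sigma)}{16\pi}}\;\Big(1-\frac{1}{16\pi}\int_{\Sigma_\sigma}H^2\,dA\Big)
\]
and the near-Euclidean area/volume asymptotics on the end, one derives that for every large $V$ there is a canonical leaf whose enclosed volume equals $V$ and whose area is strictly smaller than the asymptotic Euclidean value by a definite amount of order $\ADMm\cdot V^{1/3}$. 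Using such a leaf --- translated and grafted onto $\Omega_\infty$ in the asymptotic chart via a standard cut-and-paste that straightens the graphical normal with negligible perimeter cost --- produces a competitor $\widetilde\Omega$ of volume $V$ satisfying
\[
\mathcal{H}^2(\partial^*\widetilde\Omega)\;<\;\mathcal{H}^2(\partial^*\Omega_\infty)+(36\pi V_\infty^2)^{1/3}
\]
strictly, for all sufficiently large $V$ and any $V_\infty>0$. This contradicts the lower bound from the concentration--compactness step, forcing $V_\infty=0$ and completing the existence proof.

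The \emph{main obstacle} is the strict gain in the third step. Without the nonnegative scalar curvature assumption one cannot invoke the sharp isoperimetric inequality (Theorem~\ref{cor:isoperimetricinequality}), so the $\ADMm\cdot V^{1/3}$ improvement must be extracted purely from the Hawking-mass/CMC asymptotics of the canonical foliation and the positivity of $\ADMm$. Making this gain quantitative, uniform in $V$, and robust against the $O(|x|^{-\tau})$ metric error --- and controlling the grafting so that the perimeter defect absorbs the $o(V^{1/3})$ remainders --- is the delicate point, reprising in spirit the effective volume-comparison arguments pioneered by Bray and refined by Eichmair--Metzger, now in the absence of any scalar-curvature sign condition.
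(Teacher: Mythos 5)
This theorem is quoted in the paper from Carlotto--Chodosh--Eichmair \cite{CCE}; the paper itself offers no proof, so there is no in-paper argument to compare against. Evaluating your proposal on its own merits: the overall architecture (direct method, concentration--compactness at infinity, then a strictly better competitor from positive ADM mass) is the right shape and is indeed the strategy of \cite{CCE}. However, the competitor-construction step has a genuine gap.

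You invoke the canonical CMC foliation of Theorem~\ref{thm:existence:CMC} and assert that ``positive mass and integrable scalar curvature suffice to meet its hypotheses.'' They do not. Nerz's existence theorem requires either $R \geq 0$ or the pointwise decay $R(g) = O(|x|^{-5/2-\tau})$, and as the paper's own remark following Theorem~\ref{thm:existence:CMC} emphasizes, Nerz showed these decay assumptions are optimal and cannot be weakened. Mere integrability of $R$ (the hypothesis of the statement in question) is strictly weaker and does not guarantee the canonical foliation exists; if it does not exist your strict-gain argument has nothing to apply to. There is a second, subtler issue even if the foliation were available: the Hawking-mass asymptotics $\Hm(\Sigma_\sigma)\to m_{ADM}$ constrain only the area and mean curvature of the leaves, not their enclosed volume, so extracting the claimed $m_{ADM}\cdot V^{1/3}$ volume gain from the Hawking mass alone requires an additional ingredient (in the nonnegative-scalar-curvature setting this is Huisken's monotonicity under mean curvature flow, which again is not at your disposal here).

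The fix, which is what \cite{CCE} actually does, is to replace canonical leaves by centered coordinate balls $B_\rho$ in the chart at infinity. By the observation of Fan--Miao--Shi--Tam \cite{FST}, the quasilocal isoperimetric mass of coordinate balls recovers the ADM mass:
\[
V(B_\rho) \;=\; \frac{\mathcal{H}^2(\partial B_\rho)^{3/2}}{6\sqrt{\pi}} \;+\; \frac{m_{ADM}}{2}\,\mathcal{H}^2(\partial B_\rho) \;+\; o\big(\mathcal{H}^2(\partial B_\rho)\big),
\]
and this expansion uses only the asymptotic flatness of $g$ at rate $\tau>1/2$ and the definition of $m_{ADM}$ --- no scalar curvature sign, no pointwise decay, and no CMC foliation. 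Since $m_{ADM}>0$, this gives exactly the definite isoperimetric gain of order $m_{ADM}\cdot V^{1/3}$ that your third step needs, and the grafting/competitor step then goes through as you describe. Your first two steps (BV compactness and the concentration--compactness decomposition with the Euclidean deficit at infinity) are fine and are essentially what \cite{CCE} does; the error is purely in reaching for the canonical foliation where the more elementary coordinate-ball computation is both sufficient and required.
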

The main theorem of this paper can be stated as follows:
\begin{thm}\label{mainthm}
Let $(M^3,g)$ be an asymptotically flat Riemannian 3-manifold with positive ADM mass $m>0$. 
Suppose the scalar curvature of $(M^3, g)$ is nonnegative or satisfies 
$R(g)=O(|x|^{-\frac{5}{2}-\tau})$.
Then there exists some $V_0>0$ such that for any $V>V_0$ there is a unique isoperimetric region $\Omega_V$ whose 
boundary is a  leaf in the canonical foliation $\{\Sigma_{\sigma}\} _{\sigma>\sigma_*}$.
\end{thm}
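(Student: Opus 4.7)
The case of nonnegative scalar curvature is already settled by Theorem~\ref{CESY}, so I would focus on the remaining case $R(g)=O(|x|^{-5/2-\tau})$. Existence of an isoperimetric region $\Omega_V$ for every large $V$ is furnished by the Carlotto--Chodosh--Eichmair existence result stated above, and the canonical foliation $\{\Sigma_\sigma\}$ exists by Theorem~\ref{thm:existence:CMC}. The task is therefore to identify $\partial\Omega_V$ with a unique leaf $\Sigma_{\sigma(V)}$ and to deduce uniqueness of $\Omega_V$.

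As a first step I would extract the structural information about $\Omega_V$ expected in this regime. Standard regularity shows that $\partial\Omega_V$ is a smooth strictly stable CMC surface. Using positive ADM mass, isoperimetric comparison against large coordinate balls, and the barrier arguments familiar from the work of Carlotto--Chodosh--Eichmair, I would show that as $V\to\infty$ the boundary $\partial\Omega_V$ is a topological sphere that escapes every compact set, has mean curvature $H_V=2/\sigma_V$ with $\sigma_V\to\infty$, and satisfies quantitative curvature and centering estimates of the same order as those enjoyed by the canonical leaves.

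The heart of the argument is to transfer the sharp inequality of Theorem~\ref{cor:isoperimetricinequality}, currently available only when $R\geq 0$, to the present setting via a fill-in construction. For a parameter $\sigma_0$ with $\sigma_0\ll\sigma_V$ I would excise the interior of a well chosen surface (for instance the leaf $\Sigma_{\sigma_0}$ of the canonical foliation, or a coordinate sphere) and glue across its boundary a compact rotationally symmetric three-manifold of nonnegative scalar curvature whose boundary data match those of $\Sigma_{\sigma_0}$ to the required order. The decay hypothesis $R=O(|x|^{-5/2-\tau})$ makes the scalar curvature integral removed in this surgery negligible, so that a small conformal correction supported near the glued boundary yields a $C^0$-asymptotically flat three-manifold $(\widetilde M,\widetilde g)$ with $\widetilde R\geq 0$, with $\ADMm(\widetilde g)=\ADMm(g)+o(1)$, and agreeing with $(M,g)$ outside a fixed compact set. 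Since both $\partial\Omega_V$ and the leaves $\Sigma_\sigma$ (for $\sigma\geq \sigma_V/2$, say) lie in the unaltered exterior region, applying Theorem~\ref{cor:isoperimetricinequality} inside $(\widetilde M,\widetilde g)$ to competitors enclosing the relevant volumes yields the asymptotic expansion
\[
V(\Sigma_\sigma)=\frac{A(\Sigma_\sigma)^{3/2}}{6\sqrt\pi}+\frac{\ADMm}{2}A(\Sigma_\sigma)+o(A(\Sigma_\sigma)),
\]
and forces $\partial\Omega_V$ to saturate the same sharp comparison. Combined with strict stability of the canonical leaves and the $o(1)$ quality of the fill-in, this pins $\partial\Omega_V$ inside the geometric neighborhood of $\Sigma_{\sigma_V}$ in which the uniqueness clause of Theorem~\ref{thm:existence:CMC} applies, so that $\partial\Omega_V=\Sigma_{\sigma_V}$; uniqueness of $\Omega_V$ is then immediate.

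I expect the principal obstacle to lie in the fill-in construction itself: producing an extension with nonnegative scalar curvature whose boundary data approximate those of $\Sigma_{\sigma_0}$ to the required precision, while simultaneously controlling the ADM mass and the volumes enclosed by both $\partial\Omega_V$ and the canonical leaves, is delicate. The scalar curvature decay rate $O(|x|^{-5/2-\tau})$ is precisely what is needed to make both the discarded negative part of $R$ and the volume shift caused by the conformal correction of strictly lower order than $o(A(\Sigma_\sigma))$, which is what preserves the sharpness of the comparison and ultimately drives the identification $\partial\Omega_V=\Sigma_{\sigma_V}$.
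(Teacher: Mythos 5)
Your proposal correctly identifies the two-case structure, the role of Carlotto--Chodosh--Eichmair existence, the Nerz canonical foliation, and the general idea that a ``fill-in'' converts the problem to the nonnegative scalar curvature setting where the sharp isoperimetric inequality is available. But you have misidentified the mechanism that actually drives the argument, and as a result the key step --- localizing $\partial\Omega_V$ near a centered coordinate sphere --- is not obtained by your construction.

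The critical point you miss is that the fill-in is designed to \emph{strictly decrease} the ADM mass, not to preserve it up to $o(1)$. In the paper one cuts along a fixed leaf $\Sigma_{\sigma_0}$, inserts the cylindrical ``cone'' metric $\gamma=(1-t/\sigma_0)^2\omega(t)+dt^2$ (which has scalar curvature $O(\sigma_0^{-2-\tau})$, positive to leading order), caps with a Euclidean ball, smooths the corners \`a la Miao, and then applies a Schoen--Yau conformal change to force $\tilde R\geq 0$. The conformal factor expands as $u=1+A/|x|+O(|x|^{-2})$ with $A<0$ of definite size: in fact $m(\tilde g_{\sigma_0,\delta_0})\leq \tfrac{7}{8}\,m(g)$, because the scalar curvature integral of the cylindrical piece is bounded below by (essentially) $4\pi\,m$, reflecting the Hawking mass of $\Sigma_{\sigma_0}$. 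Your claim that $\ADMm(\tilde g)=\ADMm(g)+o(1)$ is the opposite of the mechanism and removes the leverage the argument relies on. Relatedly, you assert the conformal correction is ``supported near the glued boundary'' and that $\partial\Omega_V$ and the canonical leaves ``lie in the unaltered exterior region.'' This is false: $u-1\sim A/|x|$ is felt everywhere, and the proof exploits precisely the $O(1/|x|)$ perturbations of area and volume at the scale $\rho_k$ of the isoperimetric region.

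Finally, and this is where your proof would actually fail, ``forcing $\partial\Omega_V$ to saturate the sharp comparison'' does not by itself localize $\partial\Omega_V$: a family of off-center large coordinate spheres also saturates the sharp isoperimetric inequality for $(M,g)$ up to $o(1)$. What rules out the off-center limit $\xi\neq 0$ is a dichotomy that only appears after the mass drop: one computes, for $\Omega_k'=\Omega_k\setminus \overline{B_{1/2}(0)}$ in the deformed metric $\tilde g_{\sigma_0,\delta_0}$,
\[
V(\Omega_k',\tilde g)-\frac{A(\partial\Omega_k',\tilde g)^{3/2}}{6\sqrt\pi}-\frac{m(\tilde g)}{2}A(\partial\Omega_k',\tilde g)
\geq \frac{2\pi\varepsilon_0\,m\,|\xi|^2}{1+|\xi|^2}\,\rho_k^2+o(\rho_k^2),
\]
using that $V$ and $A$ shrink by $3\varepsilon_0 m\int_{\Omega_k'}|x|^{-1}$ and $2\varepsilon_0 m\int_{\partial\Omega_k'}|x|^{-1}$ respectively, together with explicit evaluation of these integrals on near-balls $B_{\rho_k}(\rho_k\xi)$. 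A positive surplus contradicts the sharp inequality for $\tilde g$, so $\xi=0$; Nerz's uniqueness then identifies $\partial\Omega_V$ with a leaf. Your route has no analogue of this cancellation argument, and without the strict mass decrease there is no such surplus to exploit.
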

As a corollary, we immediately have
\begin{cor}
Assume as in the theorem above. 
Then
\[
m_{ADM}(M,g)=m_{iso}(M, g).
\]
\end{cor}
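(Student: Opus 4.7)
I would prove the equality by establishing a matching lower and upper bound. The lower bound $m_{ADM}(M,g)\le m_{iso}(M,g)$ does not use the main theorem and is exactly the Fan--Miao--Shi--Tam calculation quoted in the excerpt: taking the exhaustion $\{B_\rho\}$ by concentric coordinate balls in the asymptotic chart, one verifies $\lim_{\rho\to\infty} m_{iso}(B_\rho)=m_{ADM}(M,g)$ by a direct expansion of $\mathcal{H}^2(\partial B_\rho)$ and $\mathcal{L}^3(B_\rho)$ using \eqref{tau}, then plugs this particular exhaustion into the supremum defining $m_{iso}(M,g)$.

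For the reverse inequality, the plan is to use the main theorem to collapse the sup-over-exhaustions into a single limit along the canonical foliation. The algebraic key is that the functional
\[
\phi(A,V) := \frac{2}{A}\left(V-\frac{A^{3/2}}{6\sqrt{\pi}}\right) = \frac{2V}{A} - \frac{A^{1/2}}{3\sqrt{\pi}}
\]
satisfies $\partial_A\phi = -\frac{2V}{A^2}-\frac{1}{6\sqrt{\pi}\,A^{1/2}}<0$, so for fixed volume $V$ the quasilocal mass $m_{iso}(\Omega)=\phi(\mathcal{H}^2(\partial\Omega),V)$ is strictly maximized when $\mathcal{H}^2(\partial\Omega)$ equals the isoperimetric profile $A(V)$. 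Consequently, for any exhaustion $\{\Omega_i\}$ with $V_i := \mathcal{L}^3(\Omega_i)\to\infty$, the main theorem (applicable once $V_i\geq V_0$) produces a smooth isoperimetric region $\Omega_{V_i}$ bounded by a canonical foliation leaf $\Sigma_{\sigma(V_i)}$, and
\[
m_{iso}(\Omega_i)=\phi(\mathcal{H}^2(\partial\Omega_i),V_i) \;\le\; \phi(A(V_i),V_i)=m_{iso}(\Omega_{V_i}).
\]
Thus $\limsup_i m_{iso}(\Omega_i)\le \limsup_{V\to\infty} m_{iso}(\Omega_V)$, and taking the supremum over exhaustions reduces the upper bound to proving $\lim_{V\to\infty} m_{iso}(\Omega_V)=m_{ADM}(M,g)$.

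To execute this last step, I would use the effective asymptotics of the canonical CMC foliation from Theorem \ref{thm:existence:CMC} and Nerz's construction: each leaf $\Sigma_{\sigma}$ has mean curvature $\tfrac{2}{\sigma}$, area $4\pi\sigma^{2}$ to leading order with an explicit $m_{ADM}$-dependent correction, and the enclosed volume admits an analogous expansion. Plugging these expansions into $\phi$ yields $m_{iso}(\Omega_V)=m_{ADM}+o(1)$ as $V\to\infty$. An equivalent route is to pair $\Omega_V$ with a coordinate ball $B_{\rho(V)}$ of the same volume, bound the symmetric difference using the Hausdorff proximity of $\Sigma_{\sigma(V)}$ to $S_{\rho(V)}$ guaranteed by Theorem \ref{thm:existence:CMC}, and then transfer the FST expansion from the coordinate-ball side. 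Combined with the lower bound, this completes the proof.

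The main obstacle lies precisely in this final asymptotic computation in the case $R(g)=O(|x|^{-5/2-\tau})$, where the sharp isoperimetric inequality \eqref{eqn:isoperimetricinequality} is unavailable and one must rely entirely on Nerz's geometric estimates for the foliation. In the nonnegative scalar curvature case the step is essentially free: \eqref{eqn:isoperimetricinequality} rearranges to $m_{iso}(\Omega)\le m_{ADM}+o(1)$ for \emph{every} large-volume region, so the upper bound $m_{iso}(M,g)\le m_{ADM}(M,g)$ drops out without even invoking the main theorem. The content of the corollary in the general case is therefore precisely the passage from the structural identification of $\Omega_V$ with a canonical leaf to the explicit limit of $\phi$ along that leaf.
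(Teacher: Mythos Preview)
The paper does not supply a proof of this corollary; it is asserted as an immediate consequence of Theorem~\ref{mainthm}. Your outline is the natural way to make that assertion precise and is correct: the Fan--Miao--Shi--Tam expansion along coordinate balls gives $m_{ADM}\le m_{iso}$, and the strict monotonicity of $\phi(A,V)$ in $A$ reduces the reverse inequality to $\limsup_{V\to\infty} m_{iso}(\Omega_V)\le m_{ADM}$ along the isoperimetric exhaustion furnished by Theorem~\ref{mainthm}.

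The obstacle you flag in the $R(g)=O(|x|^{-5/2-\tau})$ case is not a genuine gap; the final limit can be computed without the sharp isoperimetric inequality, using only ingredients already present in the paper. Along any CMC foliation one has $dA/dV=H=2/\sigma$. Nerz's estimates give $A(\Sigma_\sigma)=4\pi R_\sigma^2$ with $R_\sigma/\sigma\to 1$, and the Hawking mass convergence
\[
m_H(\Sigma_\sigma)=\frac{R_\sigma}{2}\Big(1-\frac{R_\sigma^2}{\sigma^2}\Big)\to m_{ADM}
\]
(used in the proof of Lemma~\ref{mean:cur}) forces $R_\sigma=\sigma-m_{ADM}+o(1)$, hence $A(\sigma)=4\pi\sigma^2-8\pi m_{ADM}\sigma+o(\sigma)$. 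Integrating $dV=\tfrac{\sigma}{2}\,dA$ by parts,
\[
V(\sigma)=\frac{\sigma}{2}A(\sigma)-\frac{1}{2}\int^{\sigma}A(\sigma')\,d\sigma'+O(1)=\frac{4\pi}{3}\sigma^3-2\pi m_{ADM}\sigma^2+o(\sigma^2),
\]
and substituting into $\phi$ gives $m_{iso}(\Omega_V)=m_{ADM}+o(1)$. Thus the general case needs nothing beyond what is quoted from Nerz, and the shortcut via \eqref{eqn:isoperimetricinequality} in the nonnegative scalar curvature case is a convenience rather than a necessity.
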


One ingredient in our proof is the "fill-in" argument and 
 we are partially inspired by the recent work \cite{SWWZ}. To the author's best knowledge, 
this  argument is completely new in dealing with  isoperimetric problems on asymptotically flat manifolds. 
Let $\Sigma_{\sigma}$ be any leaf  of the canonical foliation, cut the domain enclosed by the leaf and fill $\Sigma_{\sigma}$ with a
suitable metric in a canonical way.  As the metric we construct has corners,
we need to smooth the metric and then take conformal deformation
to get a family of asymptotically flat metrics with nonnegative scalar curvature. 
The ADM mass of deformed metrics are strictly less than the mass of initial metric 
if the leaf $\Sigma_{\sigma}$ we choose is far away enough.
Building on the sharp isoperimetric inequality,
 we can show that  the isoperimetric regions must look like
Euclidean balls $B_1(0)$ when scaled by their volume.

We remark here that recent breakthrough was made by Chodosh and Eichmair \cite{CE:2017:1,CE:2017:2}.
They established the optimal, global result for stable constant mean curvature spheres in initial data asymptotic to Schwarzschild with nonnegative scalar curvature.
Finally, we mention some recent progress in the asymptotically hyperbolic setting.
 Chodosh \cite{Chodosh:2016} has shown that large isoperimetric surfaces are centered coordinate spheres in the special case where the metric is isometric to Schwarzschild-anti-de Sitter outside of a compact set.
Under the assumption that the manifold $(M^3, g)$
is asymptotic to Schwarzschild-anti-de Sitter with scalar curvature $R\geq -6$, 
Chodosh, Eichmair, Shi and Zhu \cite{CESZ} showed that
the leaves of the canonical foliation constructed by Rigger \cite{Rigger:2004} are unique isoperimetric surfaces for the volume they enclose. In their case, the scalar curvature assumption is necessary.

The remains of  papers are organized as follows:
In Section $2$, we construct a family of metrics $\{g_{\sigma}\}$ which coincide with $g$ outside of $\Sigma_{\sigma}$ 
and have nice behaviour in the domain enclosed by $\Sigma_{\sigma}$.
In Section $3$,  we get a family of asymptotically flat metrics with nonnegative scalar curvature
by deforming $\{g_{\sigma}\}$  as Miao did in \cite{Miao2002}
and show that the mass of deformed metrics  can be strictly less than $m$. 
We give the proof of the main theorem in the last section, \\

\noindent {\bf Acknowledgments.} The author would like to express his gratitude to Professor  Shi Yuguang for his constant encouragement. We sincerely thank Professor Michael Eichmair for his
valuable suggestions. We also thank Otis Chodosh and Wang wenlong for helpful discussions.  

\section{Gluing the metric}
 Let $\omega_{\sigma}$  be the induced metric on $\Sigma_{\sigma}$.  
 Set $\omega_{\sigma}=e^{2u_{\sigma}}\sigma^2g_*$,  here $g_*$ is some round metric on $S^2$ with area $4\pi$ and $u_{\sigma}$ is a function defined on $S^2$.
Then Nerz\cite{Nerz2015} showed that for some fixed $\alpha\in(0,1)$, it holds
\begin{equation}
||\sigma^{-2}\omega_{\sigma}-g_*||_{C^{2,\alpha}(S^2)} \leq C\sigma^{-\tau},\ \ \text{for}\ \  \sigma\gg1.
\end{equation}
Here and in the following, we always use $C$ to denote  universal constants
 depending only on $(M^3, g)$ which may vary from line to line. Then we have
\begin{equation}\label{estimate:u}
|u_{\sigma}|_{C^{2,\alpha}(S^2)}\leq C\sigma^{-\tau},
\ \ \text{for}\ \  \sigma\gg1.
\end{equation}
We need the following result obtained by Mantoulidis and Schoen in \cite{MS2015}.
\begin{lm}\label{extension}
Let $\omega=e^{2u_{\sigma}}\sigma^2g_*$  be as above. 
Then there exists a smooth path of metrics $t \mapsto \omega(t)$ such that 
\[
\omega(0)=\omega,\ \ \omega(\frac{\sigma}{2})\ \  \text{round},\ \ \frac{d}{dt}dA_{\omega(t)}\equiv0,\ \ \text{for all}\ \ t\in[0,\frac{\sigma}{2}],
\]
where $dA_{\omega(t)}$ denotes the area form for a metric $\omega(t)$.
\end{lm}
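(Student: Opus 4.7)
The plan is to build the path in two stages: first produce a ``target'' round metric $\bar\omega$ on $S^2$ whose area form equals $dA_\omega$ \emph{pointwise}, and then interpolate between $\omega$ and $\bar\omega$ through metrics sharing that common area form.

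For the first stage, let $R > 0$ satisfy $4\pi R^2 = |\omega|$ (so that $R^2 g_*$ has the same total area as $\omega$; the estimate \eqref{estimate:u} gives $R = \sigma(1 + O(\sigma^{-\tau}))$). Applying Moser's theorem on equivalence of volume forms to the two smooth positive volume forms $dA_\omega$ and $R^2 dA_{g_*}$ on the compact connected surface $S^2$ produces a smooth diffeomorphism $\phi \colon S^2 \to S^2$ with $\phi^*(R^2 dA_{g_*}) = dA_\omega$. Setting $\bar\omega := \phi^*(R^2 g_*)$ then gives a round metric of constant Gauss curvature $R^{-2}$ with $dA_{\bar\omega} = dA_\omega$.

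For the second stage, I exploit the hyperbolic geometry of metrics with a fixed area form. At each $x \in S^2$, the set of positive symmetric bilinear forms on $T_xS^2$ inducing the volume $dA_\omega(x)$ is naturally identified, after normalization by the common determinant, with $SL(2,\R)/SO(2) \cong \mathbb{H}^2$, a Hadamard manifold. The unique length-minimizing geodesic in $\mathbb{H}^2$ connecting $\omega(x)$ to $\bar\omega(x)$, reparametrized linearly over $[0,\sigma/2]$, then defines $\omega(t)(x)$. The identity $dA_{\omega(t)}(x) \equiv dA_\omega(x)$ holds by construction, so $\tfrac{d}{dt}\, dA_{\omega(t)} \equiv 0$, and smoothness of $\omega(t)$ in $(t,x)$ follows from smoothness of the hyperbolic exponential map applied to the smooth sections $\omega$ and $\bar\omega$.

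I expect the main obstacle to be Stage 1, namely matching the area form \emph{pointwise} rather than only in total: without the pointwise match there is no hope of preserving the area form along the interpolation. Invoking Moser's theorem in the smooth category delivers both this match and the required regularity of $\phi$. Stage 2 is then essentially formal once the hyperbolic structure of the fiber is recognized, though one must still verify that the pointwise geodesic construction assembles into a globally smooth family of metrics, which follows from the smooth dependence of geodesics on their endpoints in $\mathbb{H}^2$.
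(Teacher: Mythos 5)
Your proof is correct, and it takes a genuinely different route from the one in the paper. The paper follows Mantoulidis--Schoen: it posits the one-parameter conformal family $\tilde\omega(t)=e^{2u_\sigma(1-2t/\sigma)+2a(t)}\sigma^2 g_*$, with $a(t)$ tuned so that total area is preserved, and then runs a \emph{continuous} Moser-type correction -- solving $\Delta_{\tilde\omega(t)}\psi(t,\cdot)=\tfrac{4u_\sigma}{\sigma}-2a'(t)$ and pulling back along the flow of $\nabla^{\tilde\omega(t)}\psi$ -- to make the area form constant pointwise along the whole path. You instead apply Moser's theorem once, at the endpoint, to manufacture a round target $\bar\omega$ with the same pointwise area form as $\omega$, and then interpolate through the fiberwise symmetric space $SL(2,\mathbb{R})/SO(2)\cong\mathbb{H}^2$ of unit-determinant metrics. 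Both are diffeomorphism tricks; you collapse the continuous family of corrections into a single one at the end and replace the conformal path by a hyperbolic geodesic. Your argument is cleaner as a pure existence proof of the lemma as stated. What the paper's construction buys, however, is explicit formulas for $\omega(t)$, $\dot\omega(t)$, and the Gauss curvature $K_{h(t)}$ in terms of $u_\sigma$, $a(t)$, $\psi$; these are used verbatim in the proof of Lemma \ref{PRE} to obtain $R_\gamma = O(\sigma^{-2-\tau})$ and $|\dot\omega|^2_\omega = O(\sigma^{-2-2\tau})$. With your construction those bounds do not come for free: you would need a quantitative Moser (the diffeomorphism $\phi$ is $C^{2,\alpha}$--$O(\sigma^{-\tau})$ close to the identity because the two volume forms are), and then bounds on $\log A$ where $\bar\omega=\omega(A\cdot,\cdot)$, which, while entirely doable, is extra work the explicit Mantoulidis--Schoen path sidesteps.
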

\begin{proof}
The argument here follows are from \cite{MS2015}. 
Consider
\[
\tilde{\omega}(t)=e^{2u_{\sigma}(1-\frac{2t}{\sigma})+2a(t)}\sigma^2 g_*
\]
with $a(t)$ chosen so that $a(0)=0$
and 
\[
a'(t)=\frac{2}{\sigma}\fint_{S^2}u_{\sigma}dA_{\tilde{\omega}(t)}=O(\sigma^{-1-\tau}).
\]
Consider the following equation 
\begin{equation}\label{possion:equ}
\Delta_{\tilde{\omega}(t)}\psi(t,\cdot)=\frac{4u_{\sigma}}{\sigma}-2a'(t).
\end{equation}
It is solvable  since the integral of the righthand term vanishes. Then the standard elliptic estimate gives
\begin{equation}\label{est:ellip}
|\psi(t,\cdot)|_{C^2(S^2, \tilde{\omega})}\leq C\sigma^{-1-\tau},\ \ \ \ \ \text{for}\ \ \ t\in[0,\frac{\sigma}{2}].
\end{equation}
Take $X_t=\nabla^{\tilde{\omega}(t)}\psi(t,\cdot)$ and let $\phi_t$ be the one-parameter diffeomorphism group generated by $X_t$.
Consider $\omega(t)=\phi_t^*\tilde{\omega}(t)$. Then
\begin{align*}
\frac{d}{dt}dA_{\omega(t)}
=&\frac{d}{dt}\phi^*_tdA_{\tilde{\omega}(t)}=\phi_t^*\Big[\frac{d}{dt}dA_{\tilde{\omega}(t)}
+L_{\dot{\phi_t}}dA_{\tilde{\omega}(t)}\Big]\\
=&\phi^*_t\Big[\frac{1}{2}\text{tr}_{\tilde{\omega}(t)}\dot{\tilde{\omega}}(t)dA_{\tilde{\omega}(t)}
+\text{div}_{\tilde{\omega}(t)}\dot{\phi_t}dA_{\tilde{\omega}(t)}\Big]\\
=&\phi^*_t\Big[2a'(t)-\frac{4u_{\sigma}}{\sigma}+\Delta_{\tilde{\omega}(t)}\psi(t,\cdot)\Big]
dA_{\tilde{\omega}(t)}=0,
\end{align*}
where $L$ denotes the Lie derivative on $S^2$. Hence, we complete the proof.
\end{proof}
Now we define a family of metrics on $\Sigma_{\sigma}\times[0, \frac{\sigma}{2}]$ by  
\[
\gamma=f(t)\omega(t)+dt^2=
\Big(1-\frac{t}{\sigma}\Big)^2\omega(t)+dt^2.
\]
 Let $\Omega_{\sigma}$ be the domain enclosed by 
$\Sigma_{\sigma}$. Denote the surface
 $\Sigma_{\sigma}\times\{\frac{\sigma}{2}\}$ by $\Sigma_{\sigma}'$ and fill  $\Sigma_{\sigma}'$ with Euclidean ball $(g_E, \Omega_{\sigma}')$ such that
$g_E\big{|}_{\Sigma_{\sigma}'}=\gamma\big{|}_{\Sigma_{\sigma}'}$.
 We define a family of asymptotically flat metrics $\{g_{\sigma}\}$ with corners as follows:
\begin{equation}
g_{\sigma}=\left\{
\begin{aligned}
&g_E\ \  x\in\Omega_{\sigma}'\\
&\gamma\ \  \ x\in\Omega_{\sigma}\setminus \overline{\Omega_{\sigma}'}\\
&g\ \ \ x\in M^3\setminus\overline{\Omega_{\sigma}}
\end{aligned}
\right.
\end{equation}
\begin{lm}\label{PRE}
The scalar curvature of $\gamma$ satisfies 
\[
R_{\gamma}=O(\sigma^{-2-\tau})
\ \ \text{for}\ \ \sigma\gg1.
\]
\end{lm}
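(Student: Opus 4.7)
The plan is to compute $R_\gamma$ directly from the standard formula for the scalar curvature of a metric of the form $\gamma = h_t + dt^2$ with $h_t = f(t)\omega(t)$ on the cylinder $\Sigma_\sigma \times [0, \sigma/2]$. Writing $k_{ij} = \tfrac{1}{2}\partial_t (h_t)_{ij}$ and $H = \mathrm{tr}_{h_t} k$ for the second fundamental form and mean curvature of the $t$-slice, one has
\[
R_\gamma = R_{h_t} - |k|^2_{h_t} - H^2 - 2\partial_t H.
\]
The critical simplification comes from Lemma \ref{extension}: since $\frac{d}{dt}dA_{\omega(t)} \equiv 0$, we have $\mathrm{tr}_{\omega(t)} \dot\omega(t) \equiv 0$, which gives $H = f'/f$ and $|k|^2_{h_t} = \tfrac{1}{2}H^2 + \tfrac{1}{4}|\dot\omega|^2_{\omega(t)}$. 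Substituting $f(t) = (1-t/\sigma)^2$ and simplifying, the $f$-dependent terms collapse neatly to
\[
R_\gamma = \frac{\sigma^2 R_{\omega(t)} - 2}{(\sigma - t)^2} - \frac{1}{4}|\dot\omega(t)|^2_{\omega(t)}.
\]
Since $\sigma - t \geq \sigma/2$ on the relevant interval, it suffices to establish the two uniform bounds $\sigma^2 R_{\omega(t)} - 2 = O(\sigma^{-\tau})$ and $|\dot\omega(t)|^2_{\omega(t)} = O(\sigma^{-2-2\tau})$ on $[0,\sigma/2]$.

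For the scalar-curvature estimate I would unwind the construction in the proof of Lemma \ref{extension}: $\omega(t) = \phi_t^* \tilde\omega(t)$ with the conformal metric $\tilde\omega(t) = e^{2v(t, \cdot)} \sigma^2 g_*$ and $v(t,x) = u_\sigma(x)(1 - 2t/\sigma) + a(t)$. The estimate \eqref{estimate:u} for $u_\sigma$, together with $a(t) = \int_0^t O(\sigma^{-1-\tau})\, ds = O(\sigma^{-\tau})$ on $[0, \sigma/2]$, yields $|v|_{C^{2,\alpha}(S^2)} = O(\sigma^{-\tau})$. The two-dimensional conformal formula then gives
\[
\sigma^2 R_{\tilde\omega(t)} = 2 e^{-2v}\bigl(1 - \Delta_{g_*} v\bigr) = 2 + O(\sigma^{-\tau}),
\]
and scalar curvature transfers to $\omega(t)$ through the isometry $\phi_t$.

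For the $|\dot\omega|^2$ estimate I would use the identity $\dot\omega(t) = \phi_t^*\bigl(\partial_t\tilde\omega(t) + L_{X_t}\tilde\omega(t)\bigr)$. Pullback by the isometry $\phi_t$ preserves pointwise tensor norms, so the problem reduces to bounding the two summands in the $\tilde\omega$-norm. The conformal form $\partial_t\tilde\omega = 2\bigl(-2u_\sigma/\sigma + a'(t)\bigr)\tilde\omega$ is clearly $O(\sigma^{-1-\tau})$, and the Lie-derivative term equals $2\,\mathrm{Hess}_{\tilde\omega}\psi$ because $X_t = \nabla^{\tilde\omega}\psi$, which is controlled by \eqref{est:ellip} to be $O(\sigma^{-1-\tau})$ as well. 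Squaring produces $|\dot\omega|^2_\omega = O(\sigma^{-2-2\tau}) = o(\sigma^{-2-\tau})$, which is stronger than what is needed.

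The main technical obstacle is more careful bookkeeping than a deep difficulty: one must verify that all implicit constants remain uniform in $t \in [0, \sigma/2]$, i.e., that the $C^{2,\alpha}$-control of $u_\sigma$ from Nerz's theorem is preserved through the conformal path, the elliptic solve for $\psi$, and the flow $\phi_t$. Once this is in place, the conclusion is immediate: on $[0, \sigma/2]$, $(\sigma - t)^{-2} \leq 4 \sigma^{-2}$, so
\[
R_\gamma = O(\sigma^{-\tau})\cdot O(\sigma^{-2}) + O(\sigma^{-2-2\tau}) = O(\sigma^{-2-\tau}).
\]
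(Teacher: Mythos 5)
Your proposal is correct and follows essentially the same route as the paper: compute $R_\gamma$ via the warped-cylinder scalar-curvature identity, exploit $\mathrm{tr}_\omega \dot\omega \equiv 0$ from the area-preserving path, and estimate the Gauss curvature of the slices and $|\dot\omega|^2_\omega$ separately through the conformal/pullback structure. Your formula $R_\gamma = R_{h_t} - |k|^2 - H^2 - 2\partial_t H$ with $k = \tfrac{1}{2}\dot h$ is the same identity the paper writes as $R_\gamma = 2K_h - \mathrm{tr}_h\ddot h - \tfrac{1}{4}(\mathrm{tr}_h\dot h)^2 + \tfrac{3}{4}|\dot h|^2_h$, and your simplified expression $R_\gamma = \tfrac{\sigma^2 R_{\omega(t)} - 2}{(\sigma - t)^2} - \tfrac{1}{4}|\dot\omega|^2_\omega$ agrees with the paper's $2K_h - \tfrac{2}{f\sigma^2} - \tfrac{1}{4}|\dot\omega|^2_\omega$ after substituting $\sigma^2 f = (\sigma - t)^2$; the two ensuing estimates match \eqref{Gauss:curv} and \eqref{error:term1}.
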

\begin{proof}
Set $h(t)=f(t)\omega(t)$.  Then
differentiating with respect to $t$ gives
\[
\dot{h}=f'\omega+f\dot{\omega}.
\]
Recall that $tr_{\omega}\dot{\omega}=0$. It follows
\[
tr_h\dot{h}=\frac{2f'}{f} \ \ \ \text{and}\ \ \ |\dot{h}|_h^2=\frac{2f'^2}{f^2}+|\dot{\omega}|^2_{\omega}.
\]
Differentiating and tracing again,
\[
\ddot{h}=f''\omega+2f'\dot{\omega}+f\ddot{\omega}\ \ \text{and}\ \ tr_h\ddot{h}=\frac{2f''}{f}+tr_{\omega}\ddot{\omega}.
\]
Using $tr_{\omega}\dot{\omega}=0$, we get
\[
tr_{\omega}\ddot{\omega}=|\dot{\omega}|^2_{\omega}.
\]
The scalar curvature of $\gamma$ is given by
\begin{align}\label{scalar:curv1}
R_{\gamma}=&2K_h-tr_h\ddot{h}-\frac{1}{4}(tr_h\dot{h})^2+\frac{3}{4}|\dot{h}|_h^2\nonumber\\
=&2K_h-\frac{2f''}{f}-|\dot{\omega}|^2_{\omega}-\frac{f'^2}{f^2}+\frac{3}{4}(\frac{2f'^2}{f^2}+|\dot{\omega}|^2_{\omega})\nonumber\\
=&2K_h+\frac{f'^2}{2f^2}-\frac{2f''}{f}-\frac{1}{4}|\dot{\omega}|_{\omega}^2\nonumber\\
=&2K_h-\frac{2}{f\sigma^2}-\frac{1}{4}|\dot{\omega}|_{\omega}^2.
\end{align}
By (\ref{estimate:u}), the Gauss curvature of $h $ can be estimated by  
\begin{align}\label{Gauss:curv}
K_h=&f^{-1}\phi^*_t\Big[e^{-2u_{\sigma}(1-\frac{2t}{\sigma})-2a(t)}\sigma^{-2}
\Big(1-2(1-\frac{2t}{\sigma})\Delta_{S^2}u_{\sigma}\Big)\Big]\nonumber\\
=& \frac{1}{f\sigma^{2}}+O(\sigma^{-2-\tau}).
\end{align}
Note that
\[
\dot{\omega}=\phi_t^*\Big[e^{2u_{\sigma}(1-\frac{2t}{\sigma})+2a(t)}
\sigma^2(2a'(t)-\frac{4u_{\sigma}}{\sigma}) g_*
+\nabla^{2}_{\tilde{\omega}}\psi\Big].
\]
Then it follows from (\ref{est:ellip}) that
\begin{equation}\label{error:term1}
|\dot{\omega}|_{\omega}^2=O( \sigma^{-2-2\tau}).
\end{equation}
Substituting (\ref{Gauss:curv}) and (\ref{error:term1}) into (\ref{scalar:curv1}) gives
the desired estimate.
\end{proof}
\begin{lm}\label{mean:cur}
Let $\{-\frac{\partial}{\partial t}\}$ be the outer normal vector field on 
$\Sigma_{\sigma}\times[0,\frac{\sigma}{2}]$. Then
for $\sigma\gg 1$, we have
\[
 H(\Sigma_{\sigma},\gamma)=\frac{2}{\sigma}=H(\Sigma_{\sigma},g)\ \ \text{and}
 \ \ H(\Sigma_{\sigma}',g_E)=\frac{4}{R_{\sigma}}\geq\frac{4}{\sigma}= H(\Sigma_{\sigma}',\gamma),
\]
where $R_{\sigma}$ is the constant such that $A(\Sigma_{\sigma}, g)=4\pi R^2_{\sigma}$. 
\end{lm}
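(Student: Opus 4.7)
The plan is to verify the four mean curvature values separately, using the warped--product structure of $\gamma$ for the $\gamma$-side computations and area matching for the Euclidean side. The identity $H(\Sigma_\sigma, g) = 2/\sigma$ is immediate: Theorem \ref{thm:existence:CMC} defines the canonical foliation to be by stable CMC spheres of mean curvature $2/\sigma$.

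For the mean curvatures in $\gamma = f(t)\omega(t) + dt^2$, the induced metric on the level set $\Sigma_t = \Sigma_\sigma \times \{t\}$ is $g_t = f(t)\omega(t)$, so with respect to the normal $\partial/\partial t$,
\[
H(\Sigma_t, \gamma) = \tfrac{1}{2}\mathrm{tr}_{g_t}(\partial_t g_t) = \frac{f'(t)}{f(t)} + \tfrac{1}{2}\mathrm{tr}_{\omega(t)}\dot{\omega}(t).
\]
The key input is $\mathrm{tr}_{\omega(t)}\dot{\omega}(t)=0$, which follows by differentiating the identity $\frac{d}{dt}dA_{\omega(t)} \equiv 0$ from Lemma \ref{extension}. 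Flipping the sign for the outer normal $-\partial/\partial t$ and substituting $f(t) = (1 - t/\sigma)^2$ yields $H = 2/(\sigma - t)$. Evaluating at $t=0$ gives $H(\Sigma_\sigma,\gamma)=2/\sigma$, and at $t=\sigma/2$ gives $H(\Sigma_\sigma',\gamma)=4/\sigma$.

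For the Euclidean side, one reads off the radius of $\Omega_\sigma'$ from area matching. The induced metric $\gamma|_{\Sigma_\sigma'} = \tfrac{1}{4}\omega(\sigma/2)$ is round by Lemma \ref{extension}, and its total area equals $\tfrac14 A(\Sigma_\sigma, \omega_\sigma) = \pi R_\sigma^2$ because $dA_{\omega(t)}$ is $t$-invariant. Hence $\Omega_\sigma'$ is the round Euclidean ball of radius $R_\sigma/2$, and the outward radial normal (which coincides with $-\partial/\partial t$ under the gluing identification) yields $H(\Sigma_\sigma', g_E) = 4/R_\sigma$.

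It remains to verify $4/R_\sigma \geq 4/\sigma$, equivalently $R_\sigma \leq \sigma$ for $\sigma \gg 1$. Using $H \equiv 2/\sigma$ and $A(\Sigma_\sigma, g) = 4\pi R_\sigma^2$, the Hawking mass computes to
\[
\Hm(\Sigma_\sigma) = \sqrt{\tfrac{A(\Sigma_\sigma, g)}{16\pi}}\Bigl(1 - \tfrac{1}{16\pi}\int_{\Sigma_\sigma} H^2\,dA\Bigr) = \frac{R_\sigma}{2}\Bigl(1 - \frac{R_\sigma^2}{\sigma^2}\Bigr).
\]
By the Huisken--Yau and Nerz analysis of the canonical foliation, $\Hm(\Sigma_\sigma)\to \ADMm > 0$ as $\sigma\to\infty$, so $\Hm(\Sigma_\sigma)\geq 0$ for $\sigma\gg 1$, forcing $R_\sigma \leq \sigma$. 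The only real subtlety in this proof is careful bookkeeping of the normal orientations at $\Sigma_\sigma$ and $\Sigma_\sigma'$, so that $-\partial/\partial t$ agrees with the outward unit normal of $\Omega_\sigma$ on one side and of the Euclidean ball $\Omega_\sigma'$ on the other.
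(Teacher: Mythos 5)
Your proof is correct and follows essentially the same approach as the paper's: compute $H(\Sigma_\sigma,\gamma)$ and $H(\Sigma_\sigma',\gamma)$ from the warped-product structure $\gamma = f(t)\omega(t)+dt^2$ using $\mathrm{tr}_{\omega(t)}\dot\omega(t)=0$, read off the Euclidean radius $R_\sigma/2$ from area matching with the round metric $\frac14\omega(\sigma/2)$, and deduce $R_\sigma\leq\sigma$ from the Hawking-mass convergence $m_H(\Sigma_\sigma)\to m>0$. The only cosmetic difference is that you invoke mere nonnegativity of $m_H(\Sigma_\sigma)$ for large $\sigma$, whereas the paper records the slightly sharper bound $1-R_\sigma^2/\sigma^2\geq m/\sigma$; both suffice here.
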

\begin{proof}A direct computation shows that
\[
H(\Sigma_{\sigma}, \gamma)=-\frac{f'(0)}{f(0)}=\frac{2}{\sigma}=H(\Sigma_{\sigma},g).
\]
 The mean curvature of $\Sigma_{\sigma}'$ with respect to metric $\gamma$ and $g_E$ are
 repectively given by
\begin{align}
H(\Sigma_{\sigma}',\gamma)=&\frac{-f'(\frac{\sigma}{2})}{f(\frac{\sigma}{2})}=\frac{4}{\sigma}\ \ \ \text{and}\ \ 
\ \ \ H(\Sigma_{\sigma}',g_E)=\frac{2}{\sqrt{f(\frac{\sigma}{2})}R_{\sigma}}=\frac{4}{R_{\sigma}}.
\end{align}
Note the Hawking mass of $\{\Sigma_{\sigma}\}$ satisfy
\[
m_H(\Sigma_{\sigma})=\sqrt{\frac{A(\Sigma_{\sigma},g)}{16\pi}}
\Big(1-\frac{A(\Sigma_{\sigma}, g)H^2(\Sigma_{\sigma},g)}{16\pi}\Big) \rightarrow m,
\ \ \text{as} \ \ \sigma\rightarrow \infty.
\]
Then we have
\begin{equation}\label{est:R}
1-\frac{R^2_{\sigma}}{\sigma^2}\geq\frac{m}{\sigma}\ \ \text{for}\ \ \sigma\gg1.
\end{equation}
This finishes the proof.
\end{proof}
By our construction,  we have
\begin{cor}\label{sob:inequ}
For $\sigma\gg 1$,  the Sobolev Constant of $\{g_{\sigma}\}$
is controlled by $C$ depending only on $(M^3, g)$.
\end{cor}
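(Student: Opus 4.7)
The plan is to exploit the scale invariance of the critical Sobolev embedding $W^{1,2}\hookrightarrow L^6$ in dimension three, and to verify that the rescaled family $\tilde g_\sigma := \sigma^{-2} g_\sigma$ is uniformly bi-Lipschitz equivalent to a fixed reference metric.

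First, for $u\in C_c^\infty(M)$, the Sobolev inequality
\[
\|u\|_{L^6(M,g)}^2 \leq C_S(g)\int_M |\nabla u|_g^2\, dV_g
\]
is invariant under rescaling $g\mapsto\lambda^2 g$, since in dimension three both sides scale by the same power of $\lambda$. It therefore suffices to bound $C_S(\tilde g_\sigma)$ uniformly in $\sigma$.

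Next, I would identify the limiting geometry of $\tilde g_\sigma$ on each of the three pieces of the construction. On the exterior $M\setminus\Omega_\sigma$, introducing the rescaled asymptotic coordinate $y=x/\sigma$, the decay $g_{ij}-\delta_{ij}=O(|x|^{-\tau})$ makes $\tilde g_\sigma$ uniformly $C^0$-close to the Euclidean metric on $\{|y|\gtrsim 1\}$. On the annular region $\Omega_\sigma\setminus\overline{\Omega'_\sigma}$, changing variables via $s=t/\sigma \in [0,1/2]$ yields
\[
\tilde\gamma = (1-s)^2\, \sigma^{-2}\omega(s\sigma) + ds^2;
\]
the estimate $\|\sigma^{-2}\omega_\sigma - g_*\|_{C^{2,\alpha}(S^2)}\leq C\sigma^{-\tau}$ from Nerz, combined with the construction in Lemma~\ref{extension} and the elliptic estimate (\ref{est:ellip}), gives $\sigma^{-2}\omega(s\sigma)\to g_*$ in $C^{2,\alpha}$ uniformly in $s$, so $\tilde\gamma$ converges in $C^0$ to the flat warped-product $(1-s)^2 g_* + ds^2$ on a Euclidean annulus. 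On the inner ball $\Omega'_\sigma$, the rescaled metric is Euclidean on a ball of radius $R_\sigma/(2\sigma)\to 1/2$ by Lemma~\ref{mean:cur} and (\ref{est:R}). The three pieces glue across the Lipschitz interfaces $\Sigma_\sigma$ and $\Sigma'_\sigma$ to a single limiting reference metric $g_\infty$ on $M$, and $\tilde g_\sigma \to g_\infty$ uniformly in $C^0$.

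Finally, the classical principle that the Sobolev constant is stable under uniform bi-Lipschitz perturbation applies equally to Lipschitz (in particular, $C^0$-with-corners) metrics, since the inequality is integral in nature and is preserved by bi-Lipschitz change of metric up to a factor depending only on the distortion. The limit $g_\infty$ is asymptotically Euclidean and smooth away from two closed surfaces, hence admits a finite Sobolev constant; uniform bi-Lipschitz convergence $\tilde g_\sigma \to g_\infty$ then gives $C_S(\tilde g_\sigma) \leq C$, and undoing the rescaling concludes the proof. The main technical point is in the middle piece: one must verify uniform $C^{2,\alpha}$ convergence of $\sigma^{-2}\omega(s\sigma)$ for $s\in[0,1/2]$, which requires tracing through the pushforward by $\phi_t$ in Lemma~\ref{extension} and using (\ref{est:ellip}); everything else reduces to the standard bi-Lipschitz principle.
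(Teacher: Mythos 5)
The paper itself offers no argument for this corollary; it is stated with the bare preamble ``By our construction, we have,'' so there is nothing concrete to compare against line by line. Your rescaling strategy is a natural and correct way to make the assertion precise, and it is consistent with what the paper must implicitly have in mind: all three pieces of $g_\sigma$ have explicitly controlled geometry at scale $\sigma$.

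Two remarks to tighten your write-up. First, your limit $g_\infty$ is not merely ``asymptotically Euclidean and smooth away from two closed surfaces'' — once you set $r=1-s$, the middle piece $(1-s)^2 g_* + ds^2 = r^2 g_* + dr^2$ is exactly the Euclidean metric on the annulus $\{1/2\le r\le 1\}$, the inner piece is a Euclidean ball of radius $R_\sigma/(2\sigma)\to 1/2$ by \eqref{est:R}, and the exterior converges to $\delta$ on $\{|y|\gtrsim 1\}$; the three pieces match up so that the limit is the standard flat metric on $\R^3$. This makes the last step cleaner: rather than appealing to finiteness of the Sobolev constant of an abstract singular reference metric, you simply observe that $\sigma^{-2}g_\sigma$ is uniformly bi-Lipschitz to the Euclidean metric on $\R^3$ for $\sigma\gg1$, for which the Sobolev constant is known. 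Second, you have made the technical burden on the middle piece heavier than necessary: bi-Lipschitz control requires only a uniform $C^0$ bound $|\sigma^{-2}\omega(t)-g_*|_{g_*}\le C\sigma^{-\tau}$, not $C^{2,\alpha}$ convergence. This follows directly from $\sigma^{-2}\tilde\omega(t)=e^{2u_\sigma(1-2t/\sigma)+2a(t)}g_*=(1+O(\sigma^{-\tau}))g_*$ together with the smallness of the generator $X_t$ from \eqref{est:ellip}, which forces $\phi_t^*g_*-g_*=\int_0^t L_{X_s}g_*\,ds=O(\sigma^{-\tau})$ over the whole flow time $t\le\sigma/2$; no second-derivative control of $\omega(t)$ is needed. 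With these adjustments the argument is complete.
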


\section{Smoothing $\{g_{\sigma}\}$ and conformal deformations}

 In this section, we establish some estimates for certain conformal deformation equations. To begin with, we smooth the metric $g_{\sigma}$ across $\Sigma_{\sigma}$ and $\Sigma'_{\sigma}$ as Miao did in \cite{Miao2002}. Namely, we have the following proposition. 
\begin{pro}\label{smoothing the metric g}
There exists a family of $C^2$ metrics $\{g_{\sigma,\delta}\}_{0<\delta\leq\delta_*}$ on $\R^3$
 so that $g_{\sigma,\delta}$ is uniformly close to $g_{\sigma}$ on $\R^3$, $g_{\sigma,\delta}=g_{\sigma}$ outside $(\Sigma_{\sigma}\cup\Sigma'_{\sigma})\times(-\delta, \delta)$ (Gaussian  coordinates) and the scalar curvature of $g_{\sigma,\delta}$ satisfies
\begin{equation}
R_{\sigma,\delta}(z, t)=
\left\{
\begin{aligned}
& O(1)\qquad \qquad \qquad \qquad\ \  \text{for}\  (z, t)\in 
(\Sigma_{\sigma}\cup\Sigma'_{\sigma})\times\left\{\delta^2<|t|<\delta\right\},\\
&O(1)+\frac{H(z,\gamma)-H(z,g)}{\delta^2}\phi\left(\frac{t}{\delta^2}\right),\ \ \ \ \ \text{for }(z, t)\in \Sigma_{\sigma}\times[-\delta^2, \delta^2],\nonumber\\
&O(1)+\frac{H(z,g_E)-H(z,\gamma)}{\delta^2}\phi\left(\frac{t}{\delta^2}\right),\ \ \ \text{for }(z, t)\in \Sigma'_{\sigma}\times[-\delta^2, \delta^2],\nonumber
\end{aligned}
\right.
\end{equation}
where $O(1)$ represents quantities bounded by constants depending only on $g$, but not on $\delta$ or $\sigma$, and $\phi\in C^{\infty}_c([-1, 1])$ is a standard mollifier satisfying
$0\leq\phi\leq 1$, $\phi\equiv 1$ in $[-\frac{1}{3},\frac{1}{3}]$, and  $\int^1_{-1}\phi=1$.
\end{pro}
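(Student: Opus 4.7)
The plan is to follow Miao's construction in \cite{Miao2002} closely, working separately near each of the two corner surfaces $\Sigma_\sigma$ and $\Sigma'_\sigma$. In a tubular neighborhood of each corner, introduce Fermi (Gaussian) coordinates $(z,t)$ with $t$ the signed distance to the surface, so that on each side the metric $g_\sigma$ takes the form $dt^2 + h_\sigma(t,z)$ for a one-parameter family of Riemannian metrics $h_\sigma(t,\cdot)$ on the surface. Because $g_\sigma$ is continuous across each corner, the function $t \mapsto h_\sigma(t,\cdot)$ is continuous in $t$; however $\partial_t h_\sigma$ has a jump discontinuity at $t=0$, and its trace against $h_\sigma(0,\cdot)$ computes precisely the jump in mean curvature, $[H] = H(\cdot,\gamma)-H(\cdot,g)$ on $\Sigma_\sigma$ and $H(\cdot,g_E)-H(\cdot,\gamma)$ on $\Sigma'_\sigma$.

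Next, define the smoothed family by mollifying only in the normal variable: for a smooth cutoff $\chi_\delta$ equal to $1$ on $|t|\leq \delta^2$ and supported in $|t|\leq\delta$, set
\[
h_{\sigma,\delta}(t,z) \;=\; \chi_\delta(t)\!\int_{-1}^{1}\! h_\sigma(t-\delta^2 s,z)\,\phi(s)\,ds \;+\; (1-\chi_\delta(t))\,h_\sigma(t,z),
\]
and let $g_{\sigma,\delta} = dt^2 + h_{\sigma,\delta}(t,z)$ in the neighborhood, $g_{\sigma,\delta} = g_\sigma$ elsewhere. This produces a $C^2$ (in fact $C^\infty$ in the interior) family that converges uniformly to $g_\sigma$ as $\delta\to 0$. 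The transition region is chosen to have thickness $\delta$ so that $\chi_\delta',\chi_\delta''$ scale like $\delta^{-1},\delta^{-2}$ against an oscillation of $h_{\sigma,\delta}-h_\sigma$ of size $O(\delta^2)$, keeping all resulting curvature contributions $O(1)$.

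To compute the scalar curvature, apply the same Gauss-type formula used in Lemma~\ref{PRE}:
\[
R_{\sigma,\delta} \;=\; 2K_{h_{\sigma,\delta}} - \operatorname{tr}_{h_{\sigma,\delta}}\!\ddot{h}_{\sigma,\delta} - \tfrac{1}{4}\bigl(\operatorname{tr}_{h_{\sigma,\delta}}\!\dot{h}_{\sigma,\delta}\bigr)^{2} + \tfrac{3}{4}\,|\dot{h}_{\sigma,\delta}|^{2}_{h_{\sigma,\delta}}.
\]
Since $h_\sigma$ is continuous in $t$ and uniformly $C^2$ in $z$ on each side of the corner, the Gauss curvature $2K_{h_{\sigma,\delta}}$ and the first-derivative terms are bounded by constants independent of $\delta$ and $\sigma$. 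The only possibly singular contribution comes from $\operatorname{tr}_{h_{\sigma,\delta}}\ddot{h}_{\sigma,\delta}$ in the inner band $|t|\leq \delta^2$: differentiating the convolution twice gives
\[
\ddot{h}_{\sigma,\delta}(t,z) \;=\; \delta^{-2}\,\phi\!\left(\tfrac{t}{\delta^2}\right)\,[\partial_t h_\sigma](z) \;+\; O(1),
\]
and tracing against $h_{\sigma,\delta}(0,z)$ reproduces the stated formula with coefficient $[H]/\delta^2$. Outside the inner band but inside the transition region the balance above keeps the error $O(1)$.

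The main technical obstacle is the delicate scaling bookkeeping in the transition region $\delta^2 < |t| < \delta$: one must verify that the cutoff $\chi_\delta$ and its derivatives interact with the oscillation of $h_{\sigma,\delta}-h_\sigma$ in such a way that no spurious $\delta^{-1}$ or $\delta^{-2}$ term survives. A second point requiring care is checking that the implicit constants in ``$O(1)$'' are uniform in $\sigma$: this follows from the uniform $C^{2,\alpha}$ control on $\omega_\sigma$ established in (\ref{estimate:u}) together with the explicit form of $\gamma$, both of which yield bounds on $h_\sigma$, $\partial_t h_\sigma$ on either side of each corner that are uniform as $\sigma\to\infty$.
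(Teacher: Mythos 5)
The paper gives no proof of Proposition~\ref{smoothing the metric g}; it simply cites Miao's smoothing construction in \cite{Miao2002}. Your reconstruction is exactly Miao's argument applied separately at the two corners $\Sigma_\sigma$ and $\Sigma'_\sigma$: mollify the Fermi-profile $h_\sigma(t,\cdot)$ at the inner scale $\delta^2$, glue back to $g_\sigma$ over a transition annulus of width $\delta$ via a cutoff $\chi_\delta$, use the Gauss-type formula for $R$ in Gaussian coordinates, and observe that the jump in $\partial_t h_\sigma$ produces the $\delta^{-2}\phi(t/\delta^2)$ term while the transition terms ($\chi_\delta''$, $\chi_\delta'$, and the mollification error) balance to $O(1)$. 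You also correctly flag the one genuinely paper-specific point — that the $O(1)$ constants must be uniform in $\sigma$ — and correctly tie it to the uniform $C^{2,\alpha}$ control~\eqref{estimate:u} on $\omega_\sigma$ and the explicit form of $\gamma$. This matches the intended argument; no gaps.
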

Then by Lemma \ref{mean:cur}, we have
\begin{equation}
R_{\sigma,\delta}=
\left\{
\begin{aligned}
& O(|x|^{-2-\tau}), \qquad\ \text{in}\ \ \Sigma_{\sigma}\times[\delta,\frac{\sigma}{2}-\delta]
\ \ \text{or}\ \ \text{outside}\ \ \Sigma_{\sigma}\times\{-\delta\}\\
&O(1)+\frac{4}{\delta^2}(\frac{1}{R_{\sigma}}-\frac{1}{\sigma})\phi\left(\frac{t}{\delta^2}\right), 
\quad \qquad\qquad  \  \text{in}\ \  \ \Sigma_{\sigma}'\times[-\delta^2,\delta^2]\\
& O(1),\qquad\  \ \ \ \qquad\qquad \big(\Sigma_{\sigma}\times[-\delta,\delta]\big)
\cup(\Sigma'_{\sigma}\times\{\delta^2<|t|<\delta\})\\
&0 \ \qquad \ \qquad \qquad\qquad  \qquad\qquad\qquad\qquad \qquad\qquad \text{otherwise}
\end{aligned}
\right.
\end{equation}
We choose some $C^2$ function $f_{\sigma,\delta}$ satisfying $f_{\sigma,\delta}=\frac{R_{\sigma,\delta}}{8}$ outside $\Sigma'_{\sigma}\times[-\delta,
\delta]$ and 
\[
-C_0\leq f_{\sigma,\delta}\leq\frac{R_{\sigma,\delta}}{8},\ \ \text{for}\ \ \ x\in\Sigma'_{\sigma}\times[-\delta,\delta].
\]
for some uniform $C_0>0$. Then
\[
\int |f_{\sigma,\delta}|^{\frac{3}{2}}dg_{\sigma,\delta}\leq 
C(\sigma^2\delta)^{\frac{3}{2}}+\frac{C}{\sigma}.
\]
Thus, for $\sigma\gg1$ and $\delta<\frac{1}{C\sigma^3}$, we have
\begin{align}\label{integral:estimate}
\int |f_{\sigma,\delta}|^{\frac{3}{2}}\leq\frac{C}{\sigma}\rightarrow 0 \ \ \text{as}\ \ \sigma\rightarrow \infty.
\end{align}
Consider the following equation
\begin{equation}\label{conformal:equ}
\left\{
\begin{aligned}
\Delta_{g_{\sigma, \delta}}u_{\sigma,\delta}-f_{\sigma,\delta}u_{\sigma,\delta}=0\\
u_{\sigma,\delta}(\infty)=\lim_{x\rightarrow \infty}u_{\sigma,\delta}(x)=1
\end{aligned}
\right.
\end{equation}
The solvability of equation (\ref{conformal:equ}) is guaranteed by the following lemma due to Schoen-Yau\cite{SY:1979}.
\begin{lm}\label{existence}
Let $(N,g_{\scriptscriptstyle N})$ be an asymptotically flat $3$-manifold
 and $h$ be a function that has the same decay rate at $\infty$ as $R(g_{\scriptscriptstyle N})$.
Then there exists a number $\epsilon_{\scriptscriptstyle N}>0$ depending only on the $C^0$ norm of $g_{\scriptscriptstyle N}$ and the decay rate of $ g_{\scriptscriptstyle N}$, $\partial g_{\scriptscriptstyle N}$ and $\partial^2 g_{\scriptscriptstyle N}$ at $\infty$ so that if
\begin{equation}
  \left(\int_{N}|h_-|^{\frac{3}{2}}\,d\mu_{g_{\scriptscriptstyle N}}\right)^{\frac{2}{3}}<\epsilon_{\scriptscriptstyle N},
\end{equation}
 then
\begin{equation*}
\left\{
\begin{aligned}
\Delta_{g_{\scriptscriptstyle N}}u-hu&=0\quad\text{in}\ N,\\
u&\rightarrow 1 \ \ \, \mbox{at}\  \infty.
\end{aligned}
\right.
\end{equation*}
has a $C^2$ positive solution $u$ that
\begin{equation*}
u(x)=1+\frac{A}{|x|}+B
\end{equation*}
for some constant $A$ and some function $B$, where $B=O(|x|^{-2})$ and $\partial B=O(|x|^{-3})$.
\end{lm}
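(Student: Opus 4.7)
The plan is to reduce to a linear problem with zero boundary data at infinity and solve it by a Lax--Milgram/variational argument whose coercivity comes from the Sobolev inequality on the asymptotically flat end together with the smallness hypothesis on $\|h_-\|_{L^{3/2}}$. Writing $u = 1 + v$, the equation becomes
\begin{equation*}
\Delta_{g_{\scriptscriptstyle N}} v - h v = h \quad \text{in } N, \qquad v \to 0 \text{ at } \infty.
\end{equation*}
Since $(N,g_{\scriptscriptstyle N})$ is asymptotically flat, it carries a Sobolev inequality of the form $\bigl(\int_N |\varphi|^6\bigr)^{1/3} \le C_{\scriptscriptstyle N} \int_N |\nabla \varphi|^2$ for $\varphi \in C^\infty_c(N)$. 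Consequently, for such $\varphi$ one has
\begin{equation*}
\int_N h\, \varphi^2 \;\ge\; \int_N |\nabla \varphi|^2 - \Bigl(\int_N |h_-|^{\frac{3}{2}}\Bigr)^{\frac{2}{3}} \Bigl(\int_N \varphi^6\Bigr)^{\frac{1}{3}} \;\ge\; \bigl(1 - C_{\scriptscriptstyle N}\,\epsilon_{\scriptscriptstyle N}\bigr)\int_N |\nabla \varphi|^2,
\end{equation*}
so the bilinear form $B(\varphi,\psi)=\int_N (\nabla\varphi\cdot\nabla\psi + h\varphi\psi)$ is coercive on the weighted space $\mathcal D^{1,2}(N)$ (completion of $C^\infty_c$ in the Dirichlet norm) once $\epsilon_{\scriptscriptstyle N}$ is chosen sufficiently small. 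Because $h$ decays as $R(g_{\scriptscriptstyle N})$ does, in particular faster than $|x|^{-2}$, the right-hand side $\langle h, \varphi\rangle$ defines a continuous linear functional on $\mathcal D^{1,2}$ via Hölder and Sobolev, so Lax--Milgram produces a unique weak solution $v \in \mathcal D^{1,2}(N)$.

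Next I would upgrade regularity: standard interior elliptic theory applied to $\Delta v = h v + h$ gives $v \in C^2_{\mathrm{loc}}$, and a Moser iteration on large Euclidean annuli, using the decay of $h$, shows $v(x) \to 0$ as $|x| \to \infty$. To see positivity, note that $u = 1 + v$ satisfies $\Delta u - hu = 0$ with $u \to 1$, so any interior minimum at which $u \le 0$ forces $hu \le 0$ there; combined with the strong maximum principle applied componentwise to $\{u < 0\}$ and the boundary condition $u \to 1$, this rules out $u \le 0$ anywhere, giving $u > 0$.

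For the asymptotic expansion, I would use the Green's function $G(x,\cdot)$ of $\Delta_{g_{\scriptscriptstyle N}}$ on the end, which satisfies $G(x,y) = \frac{1}{4\pi|x-y|} + O(|x-y|^{-1-\tau})$ in the asymptotic chart. Writing
\begin{equation*}
u(x) = 1 + \int_N G(x,y)\, h(y) u(y)\, d\mu_{g_{\scriptscriptstyle N}}(y),
\end{equation*}
splitting the integration region into $\{|y| \le |x|/2\}$, $\{|y-x| \le |x|/2\}$ and the complement, and inserting the pointwise bound $|hu| = O(|y|^{-2-\tau})$ together with $u \to 1$, yields
\begin{equation*}
u(x) = 1 + \frac{A}{|x|} + B(x), \qquad A = \frac{1}{4\pi}\int_N hu\, d\mu_{g_{\scriptscriptstyle N}}, \qquad B(x) = O(|x|^{-2}),
\end{equation*}
with $\partial B = O(|x|^{-3})$ obtained by differentiating under the integral and reusing the same decomposition. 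The main obstacle is this last step: making the remainder bound sharp requires careful bookkeeping of the Green's function expansion on a general AF manifold (as opposed to Euclidean space), and the decay rate $\tau > 1/2$ is only marginally enough to place $hu \in L^1$, so one must use the cancellation between the local and asymptotic pieces of $G(x,y)$ rather than crude absolute value estimates.
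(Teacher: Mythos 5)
The paper does not prove Lemma~\ref{existence}; it invokes it directly from Schoen and Yau \cite{SY:1979}, so there is no in-paper proof to compare against. Your Lax--Milgram reconstruction on $\mathcal D^{1,2}(N)$ is the correct skeleton and is in the same spirit as Schoen--Yau's original argument (they solve Dirichlet problems on an exhaustion and pass to the limit, using the same $L^{3/2}$ smallness of $h_-$ together with the Sobolev inequality for the a priori bound). However, two steps need repair. The coercivity display bounds $\int_N h\varphi^2$ alone from below by $\int_N|\nabla\varphi|^2-\cdots$, which is not a true statement; what you mean is
\begin{equation*}
\int_N |\nabla\varphi|^2 + \int_N h\varphi^2 \;\ge\; \int_N |\nabla\varphi|^2 - \left(\int_N |h_-|^{3/2}\right)^{2/3}\left(\int_N\varphi^6\right)^{1/3} \;\ge\; \bigl(1 - C_{\scriptscriptstyle N}\epsilon_{\scriptscriptstyle N}\bigr)\int_N |\nabla\varphi|^2.
\end{equation*}
More seriously, the positivity argument does not close as written: at an interior minimum one has $\Delta u\ge 0$ and hence $hu\ge 0$ (not $\le 0$), which gives no contradiction because $h$ is indefinite, and the strong maximum principle for $\Delta u - hu = 0$ requires a sign condition on $h$ that is not available. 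The argument that does work reuses coercivity: let $w=\max(-u,0)$, which lies in $\mathcal D^{1,2}$ and is compactly supported since $u\to 1$; testing the equation against $-w$ gives $\int|\nabla w|^2 = -\int_{\{u<0\}}h u^2 \le \int h_- w^2 \le C_{\scriptscriptstyle N}\|h_-\|_{3/2}\int|\nabla w|^2$, so smallness of $\epsilon_{\scriptscriptstyle N}$ forces $w\equiv 0$, hence $u\ge 0$; then the strong maximum principle (now applicable since $u\ge 0$) together with $u\to 1$ yields $u>0$.

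On the asymptotic expansion, your closing sentence asserts that $\tau>1/2$ is ``marginally enough'' to put $hu\in L^1$, but pointwise decay $O(|x|^{-2-\tau})$ gives $L^1$ only for $\tau>1$; integrability here must instead come from the standing hypothesis that $R$ (hence $h$) is integrable. Even granting that $A=\tfrac{1}{4\pi}\int_N hu$ is well-defined, the crude tail estimate $\int_{|y|>|x|/2} G(x,y)|hu|$ contributes $O(|x|^{-\tau})$ rather than $O(|x|^{-2})$ under mere $\tau>1/2$ pointwise decay, so the stated bounds $B=O(|x|^{-2})$ and $\partial B=O(|x|^{-3})$ do not follow from the decomposition you sketch; one needs the faster decay present in Schoen--Yau's original setting or a finer analysis of the Green's function than a three-region split. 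You correctly identify this as the delicate step, but it should be flagged as a genuine gap in the argument as proposed, not merely as bookkeeping.
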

Set $\tilde{g}_{\sigma,\delta}=u_{\sigma,\delta}^4g_{\sigma,\delta}$. 
Then $\{g_{\sigma,\delta}\}$ is a family of asymptotically flat metrics with nonnegative scalar curvature.
Let $v_{\sigma,\delta}=u_{\sigma,\delta}-1$.
\begin{lm}\label{C^0estimate}
For $\sigma\gg1$ and $\delta\ll \frac{1}{\sigma^3}$, it holds
\[
|v_{\sigma,\delta}|(x)\leq C\sigma^{-\frac{1}{2}},\ \ \ \text{for}\ \ |x|\geq\frac{\sigma}{2}.
\]
\end{lm}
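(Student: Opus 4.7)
Setting $w := v_{\sigma,\delta} = u_{\sigma,\delta} - 1$, equation \eqref{conformal:equ} becomes
\[
\Delta_{g_{\sigma,\delta}} w - f_{\sigma,\delta}\, w = f_{\sigma,\delta}, \qquad w(x) \to 0 \text{ as } |x| \to \infty.
\]
My plan is first to derive a uniform $L^\infty$ bound on $u_{\sigma,\delta}$, and then to represent $w$ via the Green's function of $\Delta_{g_{\sigma,\delta}}$, combining this with a careful $L^1$ estimate of $f_{\sigma,\delta}$.

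For the uniform bound, since $\|f_{\sigma,\delta}\|_{L^{3/2}}^{3/2} \leq C/\sigma$ by \eqref{integral:estimate} and the Sobolev constant of $g_{\sigma,\delta}$ is uniformly bounded by Corollary \ref{sob:inequ}, I would test the equation against $w$, apply the Sobolev inequality, and absorb $|\int f w^2|$ into $\|\nabla w\|_{L^2}^2$. This yields $\|w\|_{L^6} \leq C \|f\|_{L^{6/5}}$, which tends to $0$ as $\sigma \to \infty$ (using the pointwise bounds on $f_{\sigma,\delta}$ from Lemma \ref{PRE}). A Moser iteration applied to $\Delta u = fu$, using once more the uniform Sobolev constant and smallness of $\|f\|_{L^{3/2}}$, then upgrades this $L^6$ smallness to $\|u_{\sigma,\delta}\|_{L^\infty} \leq 2$.

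For the pointwise bound on $w$, using the Green's function $G_{\sigma,\delta}$ of $-\Delta_{g_{\sigma,\delta}}$ write
\[
w(x) = -\int G_{\sigma,\delta}(x,y)\, f_{\sigma,\delta}(y)\, u_{\sigma,\delta}(y)\, d\mu_{g_{\sigma,\delta}}(y);
\]
uniform asymptotic flatness together with the uniform Sobolev constant give $G_{\sigma,\delta}(x,y) \leq C/|x-y|$ in the AF chart. I would split by the size of $|y|$ relative to $|x|$: on $\{|y| \leq |x|/2\}$ use $G \leq C/|x|$ to bound the contribution by $C\|f\|_{L^1}/|x|$; on $\{|x|/2 \leq |y| \leq 2|x|\}$ use $|f| \leq C|x|^{-2-\tau}$ together with a Riesz-potential bound to get $C|x|^{-\tau}$; on $\{|y| \geq 2|x|\}$ use $G \leq C/|y|$ and $|f| \leq C|y|^{-2-\tau}$ to get $C|x|^{-\tau}$. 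It remains to estimate $\|f_{\sigma,\delta}\|_{L^1}$: by Lemma \ref{PRE}, $|f_{\sigma,\delta}| = O(\sigma^{-2-\tau})$ on the cylinder $\Sigma_\sigma \times [\delta, \sigma/2 - \delta]$ of volume $O(\sigma^3)$, contributing $O(\sigma^{1-\tau})$; the outer region contributes $O(1)$ (by integrability of $R_g$) and the corner layers contribute $O(1)$ or less. Since $\tau > 1/2$, one obtains $\|f_{\sigma,\delta}\|_{L^1} \leq C\sigma^{\max(0,\,1-\tau)} \leq C\sigma^{1/2}$. Combining, for $|x| \geq \sigma/2$,
\[
|w(x)| \leq C\frac{\|f\|_{L^1}}{|x|} + C|x|^{-\tau} \leq C\sigma^{-1/2}.
\]

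The main obstacle is the uniform $L^\infty$ bound on $u_{\sigma,\delta}$: because $f_{\sigma,\delta}$ changes sign, a direct maximum principle is unavailable, and the uniformity of the Sobolev constant across the family $\{g_{\sigma,\delta}\}$ provided by Corollary \ref{sob:inequ} is essential to make the Moser iteration uniform in $\sigma$ and $\delta$. A secondary subtlety is that the metrics $\{g_{\sigma,\delta}\}$ have only $C^2$ regularity after smoothing the corners, so the pointwise Green's function bound $G_{\sigma,\delta}(x,y) \leq C/|x-y|$ must be verified uniformly across the family, which is where the uniform asymptotic flatness of the construction is used.
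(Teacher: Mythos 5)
Your proposal is correct in outline but takes a genuinely different route from the paper. The paper's proof has two steps: an $L^6$ estimate on $v_{\sigma,\delta}$ (the same energy argument you describe), followed by a direct Moser iteration on $v_{\sigma,\delta}$ over concentric balls $B_{r_i}(x)\subset B_{\sigma/4}(x)$ with $|x|\geq\sigma/2$, using the pointwise bound $|f_{\sigma,\delta}|=O(\sigma^{-2-\tau})$ that holds on those balls (after scaling, the iteration yields $|v_{\sigma,\delta}|(x)\leq C\big(\sigma^{-1/2}\|v_{\sigma,\delta}\|_{L^6}+\sigma^{-\tau}\big)$). You instead propose a Green's function representation $w=-\int G_{\sigma,\delta}\,f_{\sigma,\delta}\,u_{\sigma,\delta}$, a regional splitting, and an $L^1$ bound $\|f_{\sigma,\delta}\|_{L^1}\lesssim \sigma^{\max(0,1-\tau)}$. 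Both approaches yield the claimed decay; your Green's function route is perhaps more transparent as a potential-theoretic statement, while the paper's localized iteration avoids having to establish any global bound on $u_{\sigma,\delta}$ beforehand, because the inhomogeneous term $f\varphi^2|v|^{2p-1}$ is absorbed directly via H\"older and Young with an $\varepsilon=\sigma^{\tau}$ weight.

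Two points in your write-up deserve more care. First, the preliminary claim $\|u_{\sigma,\delta}\|_{L^\infty}\leq 2$ is stronger than what a Moser iteration with merely $\|f\|_{L^{3/2}}$ small would deliver: $L^{3/2}$ is the critical exponent in dimension $3$, so you only get $\|u\|_{L^\infty}\leq C$ for some uniform $C$ if you use a supercritical $L^q$ (or the pointwise) bound on $f_{\sigma,\delta}$; fortunately $\|u\|_\infty\leq C$ suffices for the rest of your argument, and once $|w|\leq C\sigma^{-1/2}$ is obtained one can bootstrap to $\|u\|_\infty\leq 2$. Moreover, this preliminary step must be carried out on \emph{all} of $\R^3$, including the smoothed corner layers and the Euclidean fill-in where the metric differs from $g$; this is precisely the region the paper's localized iteration is designed to avoid. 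Second, in your region $\{|x|/2\leq|y|\leq 2|x|\}$ the pointwise bound $|f_{\sigma,\delta}(y)|\leq C|y|^{-2-\tau}$ fails on the corner layers $\Sigma_\sigma\times[-\delta,\delta]$ and $\Sigma'_\sigma\times[-\delta,\delta]$, where $f_{\sigma,\delta}=O(1)$; you need to note that these layers have volume $O(\sigma^2\delta)$ and thickness $\delta\ll\sigma^{-3}$, so their contribution to $\int G\,|f|$ is $O(\sigma\delta)=o(\sigma^{-1/2})$ and can be absorbed. With those two repairs the argument closes.
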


\begin{proof}
We divid our proof into two steps.

\textbf{Step1}: We aim to get the $L^6$ estimate of $v_{\sigma,\delta}$. By (\ref{conformal:equ}),
\begin{equation}\label{eqn2:conformal}
\Delta_{g_{\sigma,\delta}}v_{\sigma,\delta}-f_{\sigma,\delta}v_{\sigma,\delta}
=f_{\sigma,\delta}.
\end{equation}
Multiplying the equation above with $v_{\sigma,\delta}$ and integrating on $\\R^3$ give
\[
\int (v_{\sigma,\delta}\Delta_{g_{\sigma,\delta}}v_{\sigma,\delta}+
\int f_{\sigma,\delta}v_{\sigma,\delta}^2)dg_{\sigma,\delta}
=\int f_{\sigma,\delta}v_{\sigma,\delta}dg_{\sigma,\delta}.
\]
Integrating by parts and using Holder Inequality, we have that
\begin{align}
\int|\nabla_{g_{\sigma,\delta}}v_{\sigma,\delta}|^2dg_{\sigma,\delta}
\leq&
\Big(\int |f_{\sigma,\delta}|^{\frac{3}{2}}\Big)^{\frac{2}{3}}
\Big(\int v_{\sigma,\delta}^6dg_{\sigma,\delta}\Big)^{\frac{1}{3}}\nonumber\\
&+\Big(\int |f_{\sigma,\delta}|^{\frac{6}{5}}\Big)^{\frac{5}{6}}
\Big(\int v_{\sigma,\delta}^6dg_{\sigma,\delta}\Big)^{\frac{1}{6}}.
\end{align}
The Sobolev inequality gives that
\[
\Big(\int v_{\sigma,\delta}^6dg_{\sigma,\delta}\Big)^{\frac{1}{3}}\leq C_{\sigma,\delta}\int|\nabla_{g_{\sigma,\delta}}v_{\sigma,\delta}|^2dg_{\sigma,\delta},
\]
where $C_{\sigma,\delta}$ denotes the Sobolev Constant of the metric $g_{\sigma,\delta}$. Then we have
\begin{align}
\Big(\int v_{\sigma,\delta}^6dg_{\sigma,\delta}\Big)^{\frac{1}{3}}\leq &
C_{\sigma,\delta}\Big(\int |f_{\sigma,\delta}|^{\frac{3}{2}}dg_{\sigma,\delta}\Big)^{\frac{2}{3}}
\Big(\int v_{\sigma,\delta}^6dg_{\sigma,\delta}\Big)^{\frac{1}{3}}\nonumber\\
&+\frac{1}{16}C_{\sigma,\delta}^2\Big(\int |f_{\sigma,\delta}|^{\frac{6}{5}}dg_{\sigma,\delta}\Big)^{\frac{5}{3}}
+\frac{1}{2}\Big(\int v_{\sigma,\delta}^6dg_{\sigma,\delta}\Big)^{\frac{1}{3}}.
\end{align}
Note $g_{\sigma,\delta}$ is uniformly close to $g_{\sigma}$.
Then $C_{\sigma,\delta}$ is uniformly bounded by corollary \ref{sob:inequ}. 
By (\ref{integral:estimate}), for $\sigma\gg1$ and $\delta<\frac{1}{8C\sigma^3}$, we have
\[
C_{\sigma,\delta}\Big(\int |f_{\sigma,\delta}|^{\frac{3}{2}}\Big)^{\frac{2}{3}}\leq\frac{1}{4}.
\]
Then it follows that for $\sigma\gg1$ and $\delta<\frac{1}{\sigma^3}$
\[
\Big(\int v_{\sigma,\delta}^6dg_{\sigma,\delta}\Big)^{\frac{1}{3}}
\leq C\Big(\int |f_{\sigma, \delta}|^{\frac{6}{5}}\Big)^{\frac{5}{3}}\leq
C(\sigma^{-\frac{6}{5}(\tau-\frac{1}{2})}+\sigma^2\delta)^{\frac{5}{3}}=o(1), \ \ \text{as} \ \sigma\rightarrow \infty.
\]
\textbf{Step 2}  We use Moser iteration to improve the estimate. We omit the lower index for simplicity.
By (\ref{eqn2:conformal}), we have
\[
\varphi^2 v^{2p-1}\Delta v=f\varphi^2v^{2p}+f\varphi^2v^{2p-1},
\]
where $\varphi$ is a $C^2$ function  supported in $B_{\frac{\sigma}{4}}(x)$ and $p\geq 3$ is positive integer.
Then Stokes' formula implies that
\[
-\int\varphi^2 v^{2p-1}\Delta v=(2p-1)\int \varphi^2v^{2p-2}|\nabla v|^2
+2\int \varphi v^{2p-1}\nabla\varphi\nabla v.
\]
It follows that
\begin{align}
&(2p-1)\int \varphi^2v^{2p-2}|\nabla v|^2\nonumber\\
=&-2\int \varphi v^{2p-1}\nabla\varphi\nabla v
-\int f\varphi^2v^{2p}-\int f\varphi^2v^{2p-1}\nonumber\\
\leq&\frac{2p-1}{2}\int \varphi^2v^{2p-2}|\nabla v|^2+\frac{2}{2p-1}\int|\nabla\varphi|^2v^{2p}
+\int |f|\varphi^2(v^{2p}+|v|^{2p-1})
\end{align}
On the other hand, using Sobolev inequality, we have
\begin{align}
\Big(\int(\varphi v^p)^6\Big)^{\frac{1}{3}}\leq& C\int|\nabla(\varphi v^p)|^2\nonumber\\
\leq &C\Big(\int|\nabla\varphi|^2v^{2p}+\int p^2v^{2p-2}|\nabla v|^2\Big)
\end{align}
Combining the two inequalities above gives
\begin{align}
\Big(\int(\varphi v^p)^6\Big)^{\frac{1}{3}}\leq&
C_1p^2\int|f|\varphi^2v^{2p}+C_1p^2\int|f|\varphi^2 |v|^{2p-1}+C_1\int|\nabla\varphi|^2v^{2p}.
\end{align}
By Holder inequality, 
\begin{align}
\int|f|\varphi^2 |v|^{2p-1}\leq&\Big(\int|f|\varphi^2v^{2p}\Big)^{\frac{2p-1}{2p}}
\Big(\int|f|\varphi^2\Big)^{\frac{1}{2p}}\nonumber\\
\leq&\sigma^{\tau}\int|f|\varphi^2v^{2p}+\sigma^{\tau-2p\tau}\int
|f|\varphi^2\nonumber\\
\leq&\int\sigma^{\tau}|f|\varphi^2v^{2p}+\sigma^{1-2\tau p}.
\end{align}
Then
\[
\Big(\int(\varphi v^p)^6\Big)^{\frac{1}{3}}\leq2 C_1 p^2\int\sigma^{\tau}|f|\varphi^2v^{2p}
+C_1\int|\nabla\varphi|^2v^{2p}+C_1p^2\sigma^{1-2\tau p}
\]
Using Holder inequality,
\begin{align}
p^2\int\sigma^{\tau}|f|\varphi^2v^{2p}\leq &
p^2\int_{\text{supp} \varphi}\Big(\sigma^{2\tau}|f|^2\Big)^{\frac{1}{2}}\Big(\int|\varphi v^p|^6\Big)^{\frac{1}{4}}
\Big(\int\varphi^2 |v|^{2p}\Big)^{\frac{1}{4}}\nonumber\\
\leq&\varepsilon \Big(\int|\varphi v^p|^6\Big)^{\frac{1}{3}}+\varepsilon^{-4}
p^8\Big(\int_{\text{supp}\varphi}\sigma^{2\tau}|f|^2\Big)^{2}\int\varphi^2v^{2p}
\end{align}
Take $\varepsilon$ such that $2C_1\varepsilon=\frac{1}{2}$. Then we have
\begin{align}\label{integral:estimate:last}
\Big(\int(\varphi v^p)^6\Big)^{\frac{1}{3}}\leq&
C_2p^8\Big(\int\varphi^2 \sigma^{2\tau}\Big|f|^2\Big)^{2}\int\varphi^2v^{2p}
+C_2\int|\nabla\varphi|^2v^{2p}+
C_2p^2\sigma^{1-2\tau p}\nonumber\\
\leq& C_3p^{8}\Big((\sigma^{-2}\int\varphi^2 v^{2p}
+\int|\nabla \varphi|^2v^{2p}+\sigma^{1-2\tau p}\Big).
\end{align}
We choose $\varphi_i\in C^2_c(B_{\frac{\sigma}{4}}(x))$ to be the  cut-off function depending only on the
distance to $x$ such that
\[
\varphi_i(x)=
\left\{
\begin{array}{lll}
1,\ \ \ \ \ \ \ \ \ \ \ \ \ x\in B_{r_{i+1}}(x),\\
0, \ \ \ \ \ \ \ \ \ \ \ \ \ x\notin B_{r_{i}}(x),
\end{array}
\right.
\]
and $|\nabla\varphi_i|\leq \frac{C2^{i}}{\sigma}$ for some uniform $C$. Here $r_i$ is defined by
\[
r_i=\frac{\sigma}{4}(1-\sum_{k=1}^i \frac{1}{2^{k+1}}).
\]
Set
\[
p=3^i\ \ \ \text{and} \ \ \ I_{i+1}=\sigma^{-3}\int_{B_{r_{i+1}}}|v|^{2\cdot3^{i+1}}+\sigma^{-2\tau3^{i+1}}.
\]
Then it follows (\ref{integral:estimate:last}) that
\begin{align}
I_{i+1}\leq&
C_3^33^{24i}\sigma^{-3}\Big[(\sigma^{-2}\int \varphi^2v^{2\cdot3^i}
+\int|\nabla \varphi_i |^2
v^{2\cdot3^i}+\sigma^{1-2\tau 3^i}\Big]^3+\sigma^{-2\tau3^{i+1}}\nonumber\\
\leq&C_43^{24i}8^i\Big[\sigma^{-9}(\int_{\text{supp} \varphi_i}v^{2\cdot3^i}\big)^3+\sigma^{-2\tau3^{i+1}}\Big]\nonumber\\
\leq&C_43^{24i}8^i\Big[\sigma^{-3}\int_{\text{supp} \varphi_i}v^{2\cdot3^i}+\sigma^{-2\tau3^{i}}\Big]^3
\nonumber\\
\leq&C_43^{24i}8^i I_i^3.
\end{align}
It's easy to show that
\[
I_i^{\frac{1}{2\cdot3^i}}\leq C_5 I_1^{\frac{1}{6}}.
\]
Sending $i$ to $\infty$ gives
\[
|v|(x)\leq C_5\Big(\sigma^{-\frac{1}{2}}(\int v^6)^{\frac{1}{6}}+\sigma^{-\tau}\Big)\leq C_5\sigma^{-\frac{1}{2}}.
\]
Hence, we finish the proof.
\end{proof}

\begin{thm}\label{mass:compare}
We can find some  $\sigma_0\gg1$  such that
for any $\sigma>\sigma_0$ and $\delta\leq\frac{1}{C\sigma^3}$, it holds
\[
m(\tilde{g}_{\sigma,\delta})\leq\frac{7}{8}m(g_{\sigma,\delta}).
\]
\end{thm}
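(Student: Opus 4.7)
The plan is to apply the conformal change formula for the ADM mass. By Lemma \ref{existence}, $u_{\sigma,\delta} = 1 + A_{\sigma,\delta}/|x| + B$ with $B=O(|x|^{-2})$, and a standard expansion of the ADM integrand under $\tilde g_{\sigma,\delta}=u_{\sigma,\delta}^4 g_{\sigma,\delta}$ yields
\[
m(\tilde g_{\sigma,\delta}) = m(g_{\sigma,\delta}) + 2A_{\sigma,\delta}.
\]
Since $g_{\sigma,\delta}$ agrees with $g$ outside a compact set, $m(g_{\sigma,\delta})=m$, so the theorem reduces to showing $A_{\sigma,\delta}\le -m/16+o(1)$ as $\sigma\to\infty$.

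The coefficient $A_{\sigma,\delta}$ is computed by integrating the defining equation $\Delta_{g_{\sigma,\delta}} u_{\sigma,\delta}=f_{\sigma,\delta}u_{\sigma,\delta}$ over $M$ and converting the Laplacian into the asymptotic flux:
\[
-4\pi A_{\sigma,\delta}=\int_M f_{\sigma,\delta}\,u_{\sigma,\delta}\,dV_{g_{\sigma,\delta}}.
\]
I split this into four pieces. On the exterior $M\setminus\Omega_\sigma$ the metric equals $g$ and the tail of $\int R_g\,dV$ is $o(1)$ since $R_g$ is integrable. On the interpolation region $\Omega_\sigma\setminus\Omega'_\sigma$, Lemma \ref{PRE} bounds the contribution by $O(\sigma^{1-\tau})$, which I would absorb using the additional hypothesis on $R(g)$ when $\tau$ is close to $1/2$. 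On the smoothing layer near $\Sigma_\sigma$, the matching $H(\Sigma_\sigma,\gamma)=H(\Sigma_\sigma,g)=2/\sigma$ from Lemma \ref{mean:cur} cancels the would-be $\delta^{-2}$ spike of Proposition \ref{smoothing the metric g}, yielding $O(\sigma^2\delta)=o(1)$ for $\delta\le 1/(C\sigma^3)$. The decisive contribution is from the smoothing layer near $\Sigma'_\sigma$, where the spike $\delta^{-2}(H(\Sigma'_\sigma,g_E)-H(\Sigma'_\sigma,\gamma))\phi(t/\delta^2)$ of Proposition \ref{smoothing the metric g} integrates, after dividing by $8$ and inserting $u=1+o(1)$ on $\Sigma'_\sigma$, to
\[
\frac{1}{8}\bigl(H(\Sigma'_\sigma,g_E)-H(\Sigma'_\sigma,\gamma)\bigr)\,A(\Sigma'_\sigma,\gamma)\cdot\bigl(1+o(1)\bigr)=\frac{\pi R_\sigma}{2}\Bigl(1-\frac{R_\sigma}{\sigma}\Bigr)+o(1),
\]
using Lemma \ref{mean:cur} and $A(\Sigma'_\sigma,\gamma)=\pi R_\sigma^2$.

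Finally, the Hawking-mass identity of Lemma \ref{mean:cur}, $m_H(\Sigma_\sigma)=(R_\sigma/2)(1-R_\sigma^2/\sigma^2)\to m$, combined with $R_\sigma/\sigma\to 1$, gives $(R_\sigma/2)(1-R_\sigma/\sigma)\to m/2$; therefore $-4\pi A_{\sigma,\delta}\to \pi m/2$, so $A_{\sigma,\delta}\to -m/8$ and $m(\tilde g_{\sigma,\delta})\to 3m/4<7m/8$. Choosing $\sigma_0\gg 1$ and $\delta\le 1/(C\sigma^3)$ then yields the conclusion.

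The main obstacle is the pointwise estimate $u_{\sigma,\delta}=1+o(1)$ on the interior surface $\Sigma'_\sigma$, since Lemma \ref{C^0estimate} only provides $|u-1|\le C\sigma^{-1/2}$ on the exterior $|x|\ge\sigma/2$. I would close this gap by repeating Step~2 of the Moser iteration in Lemma \ref{C^0estimate} at points in the interior, using the uniform Sobolev constant of Corollary \ref{sob:inequ} and the global $L^6$ bound from Step~1 of that lemma (both of which remain valid), combined with a Harnack-type propagation from the exterior estimate to $\Sigma'_\sigma$.
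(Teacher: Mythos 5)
Your proposal gets the high-level architecture right — conformal change formula $m(\tilde g_{\sigma,\delta})=m(g_{\sigma,\delta})+2A_{\sigma,\delta}$, integrating the defining PDE to express $A_{\sigma,\delta}$ as a bulk integral, and splitting by region — but it misidentifies where the mass decrease actually comes from, and the resulting decomposition has two fatal defects.

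First, and most importantly, you treat the collar region $\Sigma_\sigma\times[0,\sigma/2]$ as a negligible error of size $O(\sigma^{1-\tau})$ "to be absorbed." This is wrong on two counts. (a) The pointwise bound $R_\gamma=O(\sigma^{-2-\tau})$ from Lemma \ref{PRE} multiplied by the collar volume $\sim\sigma^3$ gives an estimate $O(\sigma^{1-\tau})$ that \emph{diverges} for $\tau<1$; it cannot be absorbed, and the additional hypothesis $R(g)=O(|x|^{-5/2-\tau})$ concerns the original metric on the exterior, not the interpolated metric $\gamma$ on the collar, so it does not help. (b) In fact the collar integral is \emph{the} main source of the negative mass correction in the paper's proof: by the explicit formula $R_\gamma=2K_h-\tfrac{2}{f\sigma^2}-\tfrac14|\dot\omega|^2_\omega$, Gauss--Bonnet on each leaf, and the area-preserving property of $\omega(t)$, the paper computes
\[
\int_{\Sigma_\sigma\times[0,\sigma/2]}R_\gamma\,dg_\gamma
= 4\pi\sigma-\frac{4\pi R_\sigma^2}{\sigma}+O(\sigma^2\delta)+O(\sigma^{1/2-\tau})
= 4\pi\sigma\Bigl(1-\frac{R_\sigma^2}{\sigma^2}\Bigr)+o(1)\geq 4\pi m+o(1),
\]
where the last step uses the Hawking-mass lower bound $1-R_\sigma^2/\sigma^2\geq m/\sigma$. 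The sign structure is essential and is lost if one only uses the absolute-value pointwise estimate.

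Second, your "decisive" contribution from the $\Sigma'_\sigma$ spike contradicts the construction. The paper does \emph{not} set $f_{\sigma,\delta}=R_{\sigma,\delta}/8$ near $\Sigma'_\sigma$; it only requires $-C_0\leq f_{\sigma,\delta}\leq R_{\sigma,\delta}/8$ on $\Sigma'_\sigma\times[-\delta,\delta]$ with $f_{\sigma,\delta}$ uniformly bounded, precisely so that the estimate $\int|f_{\sigma,\delta}|^{3/2}\leq C(\sigma^2\delta)^{3/2}+C/\sigma$ in (\ref{integral:estimate}) holds. If you instead let $f$ carry the $\delta^{-2}$-scale spike, then $\int|f|^{3/2}$ blows up like $\sigma^{1/2}\delta^{-1}$ for $\delta\lesssim\sigma^{-3}$, and the $L^6$ and Moser steps in Lemma \ref{C^0estimate} break down. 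Consequently the $\pi m/2$ you compute from the spike never enters the integral the paper actually estimates; the positivity of the mean-curvature jump at $\Sigma'_\sigma$ is used only to sign the spike in $R_{\sigma,\delta}$ (so the conformal deformation with $f\leq R/8$ is possible), not to supply the mass drop. Finally, your proposed fix (interior Moser plus Harnack propagation) addresses a need the paper does not have: the paper only needs $u_{\sigma,\delta}=1+O(\sigma^{-1/2})$ on $\{|x|\geq\sigma/2\}$, which contains the whole collar, and it discards the $-\int|\nabla u|^2$ term (by multiplying the PDE by $u$, not $1$) to obtain a clean one-sided inequality for $A_{\sigma,\delta}$.
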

\begin{proof}
Using the definition of mass, a straightforward calculation  yields
\[
m(\tilde{g}_{\sigma,\delta})=m(g_{\sigma,\delta})+2A_{\sigma,\delta}.
\]
where $A_{\sigma,\delta}$ is given by the expansion $u_{\sigma,\delta}(x) = 1 + \frac{A_{\sigma,\delta}}{|x|}+O(\frac{1}{|x|^2})$. Note that $f_{\sigma,\delta}=\frac{1}{8}R_{\sigma,\delta}$ outside 
$(\Sigma_{\sigma}\cup\Sigma_{\sigma}')\times[-\delta,\delta]$.
Applying integration by parts to (\ref{conformal:equ}) multiplied by $u_{\sigma,\delta}$, we have that
\begin{align}\label{mass:expansion}
4\pi A_{\sigma,\delta}
=&\int\Big(-f_{\sigma,\delta}u_{\sigma,\delta}^2
-|\nabla_{g_{\sigma,\delta}}u_{\sigma,\delta}|^2\Big)dg_{\sigma,\delta}\nonumber\\
\leq&2\int_{(\Sigma_{\sigma}\cup\Sigma_{\sigma}')\times[-\delta,\delta]}|f_{\sigma,\delta}|dg_{\sigma,\delta}
-\frac{1}{8}\int_{\Sigma_{\sigma}\times[\delta,\frac{\sigma}{2}-\delta]}R_{\sigma,\delta}
u_{\sigma,\delta}^2dg_{\sigma,\delta}\nonumber+2\int_{M'}|R_{\sigma, \delta}|dg_{\sigma,\delta}\nonumber\\
\leq& C\sigma^2\delta+2\int_{M'}|R_g|dg
-\frac{1}{8}\int_{\Sigma_{\sigma}\times[\delta,\frac{\sigma}{2}-\delta]}R_{\sigma,\delta}
u_{\sigma,\delta}^2dg_{\sigma,\delta},
\end{align}
where $M'$ consists of the points outside $\Sigma_{\sigma}\times\{-\delta\}$. 
Recall that
\[
g_{\sigma,\delta}=\gamma=h(t)+dt^2=f(t)\omega(t)+dt^2,\ \ \ 
(x,t)\in\Sigma_{\sigma}\times[\delta,\frac{\sigma}{2}-\delta].
\]
Then by  (\ref{scalar:curv1}),
\begin{align}\label{mass:lose}
&\int_{\Sigma_{\sigma}\times[\delta,\frac{\sigma}{2}
-\delta]}R_{\sigma,\delta}u_{\sigma,\delta}^2dg_{\sigma,\delta}\nonumber\\
=&\int_{\Sigma_{\sigma}\times[0,\frac{\sigma}{2}]}R_{\gamma}u_{\sigma,\delta}^2dg_{\gamma}+O(\sigma^2\delta)\nonumber\\
=&\int_{\Sigma_{\sigma}\times[0,\frac{\sigma}{2}]}R_{\gamma}\big(1+O(\sigma^{-\frac{1}{2}})\big)dg_{\gamma}+O(\sigma^2\delta)\nonumber\\
=&\int_0^{\frac{\sigma}{2}}\int_{\Sigma_{\sigma}\times\{t\}}
\Big[2K_{h(t)}-\frac{2}{f\sigma^2}-\frac{1}{4}|\dot{\omega}|_{\omega}^2\Big]
dA_{h(t)}dt+O(\sigma^2\delta)+O(\sigma^{\frac{1}{2}-\tau})\nonumber\\
=&4\pi\sigma-\frac{2}{\sigma^2}\int_0^{\frac{\sigma}{2}}\int_{\Sigma_{\sigma}\times\{t\}}dA_{\omega(t)}dt
+O(\sigma^2\delta)+O(\sigma^{\frac{1}{2}-\tau})\nonumber\\
=&4\pi \sigma-\frac{4\pi R^2_{\sigma}}{\sigma}+O(\sigma^2\delta)+O(\sigma^{\frac{1}{2}-\tau})\nonumber\\
\geq& 4\pi m(g_{\sigma,\delta})+O(\sigma^2\delta)+O(\sigma^{\frac{1}{2}-\tau}),
\end{align}
where we have used 
\[
Area(\Sigma_{\sigma}\times\{t\}, \omega(t))=4\pi R^2_{\sigma},\ \ \ \text{for}\ \ \ t\in[0,\frac{\sigma}{2}].
\]
As $\int_M|R|d_g$ is finite, we can choose $\sigma_0\gg1$ 
such that for any $\sigma\geq\sigma_0$ and
$\delta<\frac{1}{C\sigma^3}$,  
\[
C\sigma^2\delta+\int_{M'}|R|dg
+O(\sigma^2\delta)+O(\sigma^{\frac{1}{2}-\tau})\leq\frac{\pi m(g_{\sigma,\delta})}{4}.
\]
Combining above with (\ref{mass:expansion}) and (\ref{mass:lose}) yields the desired estimate.
\end{proof}

\section{Proof of Theorem \ref{mainthm}}
Now we turn to the proof of the main theorem.
Let $\{\Omega_{V_k}\}$ be isoperimetric regions of volumes $V_k \to \infty$
 and  $\{\Omega_k\}$ be the unique large component of $\{\Omega_{V_k}\}$ with $V(\Omega_k)=\frac{4\pi\rho_k^3}{3}\rightarrow\infty $. We know that $\Omega_k$ is connected.
Let $\tilde \Omega_k$ be the subset of $\{x \in \\R^3 : \rho_k \, |x| > 1/2\}$ such that 
\[
\Omega_k \setminus K \cong \{\rho_k  x : x \in \tilde \Omega_k\}
\]
Then upon passing to a subsequence, 
\[
\tilde \Omega_k \to B_1(\xi) \qquad \text{ in }  \qquad C^{2, \alpha}_{loc} (\R^3 \setminus \{0\})\ \ 
\text{for some}\ \ \xi \in \R^3.
\]
as $k \to \infty$. Our goal will be to show that $\xi = 0$. 
\begin{pro}\label{prop:centering-iso}
$\xi = 0$.
\end{pro}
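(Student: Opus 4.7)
We argue by contradiction, assuming $\xi \neq 0$. The plan is to combine the fill-in construction from Sections 2--3 with the sharp isoperimetric inequality on the modified manifold. Specifically, set $\tilde m := m(\tilde g_{\sigma,\delta})$; by Theorem~\ref{mass:compare} we have $\tilde m = m(g_{\sigma,\delta}) + 2 A_{\sigma,\delta}$ with $A_{\sigma,\delta}$ bounded uniformly away from $0$ below. Fix $\sigma\ge\sigma_0$ and $\delta\le 1/(C\sigma^3)$. For $k$ large enough that $\rho_k\gg\sigma$, the boundary $\partial\Omega_{V_k}$ sits in the exterior region where $g = g_{\sigma,\delta}$. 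Let $\hat\Omega_k$ be the region in $(\R^3,\tilde g_{\sigma,\delta})$ enclosed by $\partial\Omega_{V_k}$ containing the fill-in cap. Using $\tilde g_{\sigma,\delta} = u_{\sigma,\delta}^4 g_{\sigma,\delta}$ together with $u_{\sigma,\delta} = 1 + A_{\sigma,\delta}/|x| + O(|x|^{-2})$ from Lemma~\ref{existence}, one expands
\[
V_{\tilde g_{\sigma,\delta}}(\hat\Omega_k) = V_g(\Omega_{V_k}) + 6 A_{\sigma,\delta} \int_{\Omega_{V_k}} \frac{dV_g}{|x|} + O(\sigma^3) + o(\rho_k^2),
\]
\[
A_{\tilde g_{\sigma,\delta}}(\partial\hat\Omega_k) = A_g(\partial\Omega_{V_k}) + 4 A_{\sigma,\delta} \int_{\partial\Omega_{V_k}} \frac{dA_g}{|x|} + o(\rho_k).
\]

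Next, I would apply the sharp isoperimetric inequality (Theorem~\ref{cor:isoperimetricinequality}) on the modified manifold to $\hat\Omega_k$. Substituting the above expansions yields, after rearrangement, an inequality on $(M,g)$ of the form
\[
V_g(\Omega_{V_k}) \le \frac{A_g(\partial\Omega_{V_k})^{3/2}}{6\sqrt\pi} + \frac{m_{\textup{eff}}}{2}\, A_g(\partial\Omega_{V_k}) + o\bigl(A_g(\partial\Omega_{V_k})\bigr),
\]
where the effective mass $m_{\textup{eff}}$ is determined by $\tilde m$ plus a conformal correction involving the integrals $\int 1/|x|$ over $\Omega_{V_k}$ and $\partial\Omega_{V_k}$. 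The key computation, carried out using the rescaled convergence $\tilde\Omega_k\to B_1(\xi)$ together with the mean-value property of the harmonic function $1/|x|$ applied to shifted Euclidean balls, shows that this correction \emph{exactly} restores the original mass $m(g_{\sigma,\delta})\approx m$ when $\xi = 0$, but falls short by a definite amount when $\xi\neq 0$; concretely, using $A_{\sigma,\delta}<0$, one finds $m_{\textup{eff}} \le m - c(\xi)$ for some constant $c(\xi)>0$ proportional to $|A_{\sigma,\delta}|\cdot\min(|\xi|^2,1)$.

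Finally, to conclude, I would compare with the canonical foliation of $(M,g)$: by Lemma~\ref{mean:cur} the Hawking mass $m_H(\Sigma_\tau)\to m$, so the region $U_\tau$ enclosed by $\Sigma_\tau$ satisfies
\[
V_g(U_\tau) = \frac{A_g(\Sigma_\tau)^{3/2}}{6\sqrt\pi} + \frac{m}{2}\, A_g(\Sigma_\tau) + o\bigl(A_g(\Sigma_\tau)\bigr).
\]
Choosing $\tau$ so that $V(U_\tau) = V_k$ and using the isoperimetry $A_g(\partial\Omega_{V_k}) \le A_g(\Sigma_\tau)$ produces the lower bound $V_k \ge A_k^{3/2}/(6\sqrt\pi) + (m/2) A_k + o(A_k)$. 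Combined with the upper bound from the previous paragraph, this forces $(c(\xi)/2) A_k \le o(A_k)$, which is absurd since $A_k\to\infty$. The main obstacle is the delicate derivation of $m_{\textup{eff}}$ and the verification that the conformal correction precisely cancels the mass deficit when $\xi = 0$ but only partially otherwise; the identity expressing $m_{\textup{eff}}$ in terms of integrals of $1/|x|$ against the geometry of $\Omega_{V_k}$, and its evaluation via the harmonic mean-value theorem on shifted balls, is what makes the offset $\xi$ detectable in an argument where the leading-order geometry $B_1(\xi)$ versus $B_1(0)$ would otherwise be indistinguishable.
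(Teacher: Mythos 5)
Your proposal is correct and takes essentially the same approach as the paper: conformally deform the filled-in metric to reduce the mass, expand volume and area in the conformal factor $u^4_{\sigma,\delta}$, apply the sharp isoperimetric inequality on the deformed manifold, and exploit the potential-theoretic computation of $\int 1/|x|$ over $B_1(\xi)$ and its boundary (Newton's shell theorem / harmonic mean value on shifted balls) to show the conformal correction cancels the mass deficit only when $\xi = 0$. The paper carries out the same deficit computation explicitly, arriving at the excess $2\pi\varepsilon_0 m|\xi|^2\rho_k^2/(1+|\xi|^2) + o(\rho_k^2)$, rather than packaging it as your $m_{\mathrm{eff}}$, and the lower bound you derive from comparison with the canonical foliation is precisely the inequality $V(\Omega_k,g_{\sigma_0,\delta_0}) \ge \frac{1}{6\sqrt\pi}A^{3/2} + \frac{m}{2}A + o(\rho_k^2)$ the paper invokes, so the substance is identical.
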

\begin{re} An analogous result holds in all dimensions provided the sharp isoperimetric inequality holds for the conformal metric of smaller mass: Suppose the sharp isoperimetric inequality holds for any $n$-dimensional AF manifold $(M, g)$with nonnegative scalar curvature and positive mass.
If one can construct some scalar nonnegative metric  $\tilde{g}=u^{\frac{4}{n-2}}\bar{g}$ with 
$\bar{g}$ being some compact perturbation of $g$ 
and $u = 1 + A |x|^{2 - n} + O (|x|^{1- n})$ being some conformal deformation satisfying $A<0$,
then  large isoperimetric regions in AF manifolds with positive mass must be close to the corresponding centered coordinate balls.
\end{re}
\begin{proof}
Assume that $\xi \neq 0$.  By Theorem \ref {mass:compare}, we can choose some fixed $\sigma_0 \gg 1$
 and $\delta_0\leq \frac{1}{C\sigma_0^3}$ such that the conformal metric 
\[
\tilde{g}_{\sigma_0,\delta_0}=u^4_{\sigma_0,\delta_0}g_{\sigma_0,\delta_0}
=\Big(1+\frac{A_{\sigma_0,\delta_0}}{|x|}\Big)^4g_{\sigma_0,\delta_0}+O(|x|^{-2})
\]
satisfies
\begin{equation}
m(\tilde{g}_{\sigma_0,\delta_0})=m(g_{\sigma_0,\delta_0})+2A_{\sigma_0,\delta_0}
=m(g_{\sigma_0,\delta_0})(1-\varepsilon_0 ),
\end{equation}
for some $\varepsilon_0>0$.  Set
$\Omega_k'=\Omega_k\setminus \overline{B_{\frac{1}{2}}(0)} $. Then we have
\begin{align*}
V(\Omega'_k, \tilde{g}_{\sigma_0,\delta_0})=&V(\Omega_k, g_{\sigma_0,\delta_0})
-3\varepsilon_0m(g_{\sigma_0,\delta_0})\int_{\Omega'_k}\frac{1}{|x|}+o(\rho_k^2),\\
A(\partial{\Omega'_k}, \tilde{g}_{\sigma_0,\delta_0})=&A(\partial{\Omega}_k, g_{\sigma_0,\delta_0})
-2\varepsilon_0m(g_{\sigma_0,\delta_0})\int_{\partial\Omega'_k}\frac{1}{|x|}+o(\rho_k).
\end{align*}
On the other hand, $\{\Omega_k\}$ is a sequence of isoperimetric regions in $(M^3, g)$ and
$g=g_{\sigma_0,\delta_0}$ outside of some compact set.
Then
\[
V(\Omega_k, g_{\sigma_0,\delta_0})\geq
\frac{1}{6 \sqrt{\pi}}A^{\frac{3}{2}}(\partial{\Omega_k},g_{\sigma_0,\delta_0})
+\frac{m(g_{\sigma_0,\delta_0})}{2}A(\partial{\Omega_k},g_{\sigma_0,\delta_0})+o(\rho^2_k).
\]
Combining the above three inequalities yields 
\begin{align}
&V(\Omega'_k, \tilde{g}_{\sigma_0,\delta_0})-
\frac{1}{6 \sqrt{\pi}}A(\partial{\Omega}'_k, \tilde{g}_{\sigma_0,\delta_0})
-\frac{m(\tilde{g}_{\sigma_0,\delta_0})}{2}A(\partial{\Omega'_k}, \tilde{g}_{\sigma_0,\delta_0})\nonumber\\
\geq&V(\Omega_k, g_{\sigma_0,\delta_0})-
\frac{1}{6 \sqrt{\pi}}\Big(A(\partial{\Omega}_k, g_{\sigma_0,\delta_0})
-2\varepsilon_0m(g_{\sigma_0,\delta_0})
\int_{\partial\Omega'_k}\frac{1}{|x|}+o(\rho_k)\Big)^{\frac{3}{2}}\nonumber\\
&-\frac{m(g_{\sigma_0,\delta_0})}{2}A(\partial{\Omega_k}, g_{\sigma_0,\delta_0})
+\frac{\varepsilon_0 m(g_{\sigma_0,\delta_0})}{2}A(\partial{\Omega'_k},g_{\sigma_0,\delta_0})\nonumber\\
&-3\varepsilon_0m(g_{\sigma_0,\delta_0})\int_{\Omega'_k}\frac{1}{|x|}+o(\rho_k^2)\nonumber\\
\geq& V(\Omega_k, g_{\sigma_0,\delta_0})-
\frac{1}{6 \sqrt{\pi}}A^{\frac{3}{2}}(\partial{\Omega_k},g_{\sigma_0,\delta_0})
-\frac{m(g_{\sigma_0,\delta_0})}{2}A(\partial{\Omega_k},g_{\sigma_0,\delta_0})\nonumber\\
&+\varepsilon_0 m(g_{\sigma_0,\delta_0})\Big(2\pi\rho_k^2
+\rho_k\int_{\partial\Omega'_k}\frac{1}{|x|}
-3\int_{\Omega'_k}\frac{1}{|x|}\Big)+o(\rho_k^2)\nonumber\\
\geq& \varepsilon_0 m(g_{\sigma_0,\delta_0})\Big(2\pi\rho_k^2
+\rho_k\int_{\partial\Omega'_k}\frac{1}{|x|}-
3\int_{\Omega'_k}\frac{1}{|x|}\Big)+o(\rho_k^2)
\end{align}
Note that
 $\tilde \Omega_k \to B_1(\xi)\ \text{ in }\   C^{2, \alpha}_{loc} (\R^3 \setminus \{0\})$.
 Then 
\begin{align}
\rho_k\int_{\partial\Omega'_k}\frac{1}{|x|}=\rho_k\int_{S_{\rho_k}(\rho_k\xi)}\frac{1}{|x|}+o(\rho^2_k)
=\begin{cases}
4\pi\rho_k^2+o(\rho^2_k) & |\xi| \leq1\\
\frac{4\pi\rho^2_k}{|\xi|}+o(\rho^2_k) & |\xi| \geq 1.
\end{cases}
\end{align}
Similarly, 
\begin{align}
\int_{\Omega'_k}\frac{1}{|x|}=\int_{B_{\rho_k}(\rho_k\xi)}\frac{1}{|x|}+o(\rho^2_k)
=\begin{cases}
2\pi\rho_k^2(1-\frac{|\xi|^2}{3})+o(\rho^2_k) & |\xi| \leq1\\
\frac{4\pi\rho^2_k}{3|\xi|}+o(\rho^2_k) & |\xi| \geq 1.
\end{cases}
\end{align}
Hence, we always have
\begin{align}
&V(\Omega'_k, \tilde{g}_{\sigma_0,\delta_0})-
\frac{1}{6 \sqrt{\pi}}A(\partial{\Omega}'_k, \tilde{g}_{\sigma_0,\delta_0})
-\frac{m(\tilde{g}_{\sigma_0,\delta_0})}{2}A(\partial{\Omega'_k}, \tilde{g}_{\sigma_0,\delta_0})\nonumber\\
\geq&\frac{2\varepsilon_0 \pi m(g_{\sigma_0, \delta_0})|\xi|^2\rho^2_k}{1+|\xi|^2}+o(\rho^2_k),
\end{align}
which contradicts with the sharp isoperimetric inequality 
 \eqref{eqn:isoperimetricinequality} on $(\R^3, \tilde{g}_{\sigma_0,\delta_0})$. 
\end{proof}
\begin {proof} [Proof of Theorem \ref{mainthm}] 
By Proposition \ref{prop:centering-iso}, we see that every sufficiently large isoperimetric region
 is connected and close to the centered coordinate ball $B_1(0)$ 
after suitable scaling  in the chart at infinity (\ref{chart:infinity}). 
The uniqueness of large stable constant mean curvature spheres obtained by Nerz in \cite{Nerz2015} 
shows  the  boundary of such an isoperimetric region must be a leaf of the canonical foliation. 
\end{proof}

\end{document}